\documentclass[11pt]{article}
\usepackage{mathtools}
\usepackage{amsthm}
\usepackage{amsmath}
\usepackage{amssymb}
\usepackage{mathrsfs}
\usepackage[style=trad-alpha]{biblatex}
\usepackage{geometry}

\usepackage[shortlabels]{enumitem} 
\usepackage[mathscr]{euscript}
\usepackage{multicol}

\usepackage{parskip} 

\usepackage{tikz-cd} 

\usepackage{centernot}

\usepackage{color} 

\usepackage{todonotes}

\usepackage[T1]{fontenc} 


 \newtheorem{theorem}{Theorem}[section] 

\theoremstyle{definition}
\newtheorem{definition}[theorem]{Definition} 
\newtheorem{proposition}[theorem]{Proposition}
\newtheorem{corollary}[theorem]{Corollary}
\newtheorem{lemma}[theorem]{Lemma}

\newtheorem{example}[theorem]{Example} 

\numberwithin{equation}{subsection}

\theoremstyle{remark}
\newtheorem{remark}[theorem]{Remark} 

\makeatletter
\makeatother
\numberwithin{equation}{section}


\DeclareMathOperator{\id}{id}

\DeclareMathOperator{\Spec}{Spec}

\DeclareMathOperator{\Span}{span}

\DeclareMathOperator{\rk}{rk}
\DeclareMathOperator{\Hom}{Hom}

\DeclareMathOperator{\Char}{char}


\DeclareMathOperator{\diag}{diag} 
\DeclareMathOperator{\scalar}{scalar} 
\DeclareMathOperator{\aff}{{\bf Aff}} 
\DeclareMathOperator{\grpds}{{\bf Grpds}} 
\DeclareMathOperator{\op}{op} 
\DeclareMathOperator{\set}{{\bf Set}}
\DeclareMathOperator{\ef}{\text{ef}}
\DeclareMathOperator{\Ss}{\text{ss}}
\DeclareMathOperator{\rep}{\text{Rep}}

\DeclareMathOperator{\vect}{Vec}

\newcommand{\gr}{\text{Gr}}

\newcommand{\calo}{\mathcal{O}}

\newcommand{\cale}{\mathcal{E}}

\newcommand{\calm}{\mathcal{M}}

\newcommand{\ma}{\mathbb{A}}
\newcommand{\mz}{\mathbb{Z}}
\newcommand{\mq}{\mathbb{Q}}

\newcommand{\mg}{\mathbb{G}}


\usepackage{hyperref}
\hypersetup{
    colorlinks=true,
    linkcolor=blue,
    filecolor=blue,      
    urlcolor=blue,
}
 
\urlstyle{same}

\title{\textbf{Essentially finite $G$-torsors}}
\author{Archia Ghiasabadi\thanks{archia.ghiasabadi@gmail.com}, Stefan Reppen\thanks{stefan.reppen@gmail.com}}
\date{}


\addbibresource{./resources/bib/references.bib}

\begin{document}
\maketitle
\begin{abstract}
Let $X$ be a smooth projective curve of genus $g$, defined over an algebraically closed field $k$, and let $G$ be a connected reductive group over $k$. We say that a $G$-torsor is essentially finite if it admits a reduction to a finite group, generalising the notion of essentially finite vector bundles to arbitrary groups $G$. We give a Tannakian interpretation of such torsors, and we prove that all essentially finite $G$-torsors have torsion degree, and that the degree is 0 if $X$ is an elliptic curve. We then study the density of the set of $k$-points of essentially finite $G$-torsors of degree $0$, denoted $M_{G}^{\ef,0}$, inside $M_{G}^{\Ss,0}$, the $k$-points of all semistable degree 0 $G$-torsors. We show that when $g=1$, $M_{G}^{\ef}\subset M_{G}^{\Ss,0}$ is dense. When $g>1$ and when $\Char(k)=0$, we show that for any reductive group of semisimple rank 1, $M_{G}^{\ef,0}\subset M_{G}^{\Ss,0}$ is not dense. 
\end{abstract}
\tableofcontents
\noindent
\section{Introduction}
Let $X$ be a smooth projective connected curve over an algebraically closed field $k$. Let $g=g(X)$ be the genus of $X$. In 1938 Weil introduced the notion of a finite vector bundle; a vector bundle $E$ is called finite if there are two
distinct polynomials, $f,g\in \mathbb{N}[x]$, such that the
vector bundle $f(E)$ is isomorphic to $g(E)$ (see \cite{Weil1}). 
For $k=\mathbb C$, he proved that a vector bundle is finite if and only if it arises from a representation of $\pi_1(X)$ which factors through a finite group. Almost 40 years later, in \cite{Nori1}, Nori introduced the notion of an essentially finite vector bundle as a subquotient of a finite one. The category of essentially finite vector bundles forms a Tannakian category, and the corresponding group is known as the Nori fundamental group, a pro-group scheme over $k$ whose $k$ points are isomorphic to the étale fundamental group, $\pi_{1}^{\text{et}}(X)$, when $k$ is of characteristic 0 (see \cite[Corollary 6.7.20]{TAMAS} and also e.g., \cite{esnault}). 

Viewing a vector bundle as a $\text{GL}_n$-torsor, we are led to the question: can we generalise the notion of an essentially finite vector bundle, to a notion of an essentially finite $G$-torsor, for $G$ an affine algebraic group? Nori proved that a vector bundle $E$ is essentially finite if and only if there exists a finite group scheme $\Gamma$, a $\Gamma$-torsor $F_\Gamma$ and a representation $V$ of $\Gamma$ such that $E \cong F_\Gamma\times^{\Gamma}V$. Hence, we are led to the following definition 
\begin{definition}
An {\bf essentially finite} $G$-torsor is a $G$-torsor over $X$ which admits a reduction to a finite group.
\end{definition}
Under the correspondence between vector bundles and $\text{GL}_n$-torsors, this agrees with the known definition of essentially finite vector bundles. We prove the following.
\begin{theorem}
Let $G$ be a connected, reductive group. 
Then for any $G$-bundle $F_G$, the following are equivalent.
\begin{enumerate}
    \item The $G$-bundle $F_G$ is essentially finite.
    \item There exists a faithful representation $\rho\colon G\to \text{GL}_V$ such that $\rho_{*}F_G$ is an essentally finite vector bundle.
    \item For every representation $\rho\colon G\to \text{GL}_V$, $\rho_{*}F_G$ is an essentally finite vector bundle.
    \item There exists a proper surjective morphism $f\colon Y \to X$ such that $f^{*}F_G$ is trivial.
\end{enumerate}
\end{theorem}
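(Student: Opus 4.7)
My strategy is to prove $(1)\Leftrightarrow(3)$, $(2)\Leftrightarrow(3)$, and $(1)\Leftrightarrow(4)$. The easy directions $(1)\Rightarrow(3)$ and $(3)\Rightarrow(2)$ are almost immediate: if $F_G\cong F_\Gamma\times^\Gamma G$ with $\Gamma$ finite, then for any $\rho\colon G\to\text{GL}_V$ the associated bundle $\rho_*F_G\cong F_\Gamma\times^\Gamma V$ (with $\Gamma$ acting via $\Gamma\to G\xrightarrow{\rho}\text{GL}_V$) is essentially finite by Nori's original theorem, and $(3)\Rightarrow(2)$ is trivial as any affine algebraic group admits a faithful representation.

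For $(2)\Rightarrow(3)$ I invoke the reductivity of $G$: every finite-dimensional representation of a reductive group is a subquotient of a finite direct sum of tensor products of $\rho$ and $\rho^*$. Since essentially finite vector bundles form a Tannakian subcategory of $\vect(X)$ closed under $\oplus$, $\otimes$, duals and subquotients, the essential finiteness of $\rho_*F_G$ propagates to $\sigma_*F_G$ for every representation $\sigma$ of $G$.

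The central step is $(3)\Rightarrow(1)$ via Tannakian duality. The $G$-torsor $F_G$ corresponds to a $k$-linear tensor functor $\omega\colon\rep(G)\to\vect(X)$, $V\mapsto \rho_*F_G$, and $(3)$ says $\omega$ factors through the full subcategory of essentially finite vector bundles. By Nori this subcategory is Tannakian dual to the pro-finite Nori fundamental group scheme $\pi_1^N(X,x)$, so $\omega$ determines a morphism $\varphi\colon\pi_1^N(X,x)\to G$ of affine group schemes. Since the schematic image of $\varphi$ is simultaneously a closed subscheme of the finite-type group $G$ and a quotient of the pro-finite $\pi_1^N(X,x)$, it is a finite group scheme $\Gamma\subseteq G$; pushing the universal $\pi_1^N(X,x)$-torsor over $X$ along $\pi_1^N(X,x)\twoheadrightarrow\Gamma$ produces the desired $\Gamma$-torsor $F_\Gamma$ with $F_\Gamma\times^\Gamma G\cong F_G$.

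Finally, for $(1)\Leftrightarrow(4)$: given $(1)$, the structure map $f\colon F_\Gamma\to X$ is finite (hence proper) surjective with $f^*F_\Gamma$ trivial, so $f^*F_G$ is trivial. Conversely, given proper surjective $f\colon Y\to X$ with $f^*F_G$ trivial, fix any faithful $\rho$; then $f^*(\rho_*F_G)$ is trivial, and by Nori's characterization of essentially finite vector bundles via trivialization under proper surjective morphisms, $\rho_*F_G$ is essentially finite, giving $(2)$ and hence $(1)$. I anticipate that the main obstacle lies in the Tannakian step $(3)\Rightarrow(1)$: specifically in showing rigorously that $\varphi$ factors through a finite subgroup scheme of $G$ (amounting to the fact that a pro-finite group scheme of finite type is finite) and in extracting the explicit reduction $F_\Gamma$ functorially from the universal Nori torsor.
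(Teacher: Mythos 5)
Your overall architecture coincides with the paper's on most implications: $(1)\Rightarrow(3)\Rightarrow(2)$ are the formal directions, $(3)\Rightarrow(1)$ is the Tannakian step through $\pi_1^N(X,x)$ (including the observation that a finite-type quotient of a pro-finite group scheme is finite, which is exactly how the paper's Proposition \ref{nori.ef.equiv} is proved), and $(1)\Leftrightarrow(4)$ goes through the Biswas--dos Santos characterisation of essentially finite vector bundles by trivialisation under proper surjective morphisms. The one place where you genuinely diverge is $(2)\Rightarrow(3)$, and that is where there is a gap.

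You argue that every $W\in\rep_k(G)$ is a subquotient of some $P(V,V^{\vee})$ (true, for any faithful $V$) and that essentially finite bundles form a subcategory of $\vect_X$ ``closed under $\oplus$, $\otimes$, duals and subquotients.'' The last clause is false as stated: $\vect_X$ is not abelian, and essentially finite bundles are closed only under subquotients taken inside Nori's abelian category of (strongly) semistable degree-$0$ bundles. A subbundle of an essentially finite bundle need not be essentially finite --- for instance, a negative-degree sub-line bundle of a stable essentially finite bundle of rank $2$ --- so to run your argument you must verify that the subbundle $(W_2)_{F_G}\subset P(V,V^{\vee})_{F_G}$ coming from a subrepresentation $W_2$ is itself semistable of degree $0$; the degree is the nontrivial point. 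In characteristic $0$ you can dodge this because $\rep_k(G)$ is semisimple, so $W$ is a direct summand of $P(V,V^{\vee})$ and direct summands of semistable degree-$0$ bundles are again semistable of degree $0$. In positive characteristic $\rep_k(G)$ is not semisimple, and you need an additional argument: for example, decompose $V$ into isotypic pieces for the connected centre (these are $G$-stable direct summands, hence give degree-$0$ summands of the essentially finite bundle $\rho_*F_G$), and use faithfulness of $\rho$ to conclude that $\deg F_G$ pairs to zero with all of $X^{*}(G)$, so every associated bundle has degree $0$. The paper sidesteps all of this with a characteristic-free device that you do not use: since $G$ is reductive, $GL_V/G$ is affine, so triviality of $f^{*}\rho_{*}F_G$ on a proper connected $Y$ forces triviality of $f^{*}F_G$ itself (Lemma \ref{triviality.via.faithful.reps}), whence $f^{*}\sigma_{*}F_G$ is trivial for every representation $\sigma$ and $(3)$ follows from the trivialisation criterion. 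Your route is salvageable, but as written the subquotient step does not go through.
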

 Note also that since semistability can be checked on the adjoint bundle, every essentially finite $G$-torsor is semistable. We give a self-contained proof of this fact, not using the adjoint representation.

Let now $M_{G}^{\Ss}$ denote the moduli space of semistable $G$-bundles over $X$, for $G$ a connected reductive group. Recall that the connected components of $M_{G}^{\Ss}$ are indexed by the algebraic fundamental group of $G$, $\pi_1(G)$. If a $G$-bundle, $F_G$, lies in a component corresponding to $d\in \pi_1(G)$, then it is said to have degree $d$. Essentially finite vector bundles always have degree 0. We prove the following.
\begin{theorem}
For any connected reductive group $G$, every essentially finite $G$-torsor over $X$ is of torsion degree.
\end{theorem}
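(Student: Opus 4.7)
The plan is to reduce the claim to the standard pullback formula for line bundle degrees, via the abelianisation of $G$.

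Let $T := G/[G,G]$ denote the abelianisation torus of $G$. Since $[G,G]$ is semisimple, its algebraic fundamental group is finite, and therefore the natural map $\pi_1(G) \to \pi_1(T) = X_*(T)$ has torsion kernel. Consequently $\deg(F_G) \in \pi_1(G)$ is torsion if and only if the induced $T$-bundle $F_T := F_G \times^G T$ satisfies $\deg(F_T) = 0$ in the torsion-free group $X_*(T)$. Thus it suffices to show $\deg(F_T) = 0$.

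Applying Theorem 1.2(4) yields a proper surjective morphism $g \colon Y \to X$ with $g^* F_G$ (and hence $g^* F_T$) trivial. Passing to an irreducible component of $Y$ that dominates $X$, cutting down to dimension one by generic hyperplane sections, and normalising, we may replace $Y$ by a smooth projective connected curve $Y'$ together with a finite morphism $g' \colon Y' \to X$ of some positive degree $n$, for which $(g')^* F_T$ is still trivial. For each character $\chi \in X^*(T)$, the pushout line bundle $L_\chi := \chi_* F_T$ on $X$ satisfies $\deg(L_\chi) = \langle \chi, \deg F_T \rangle$, and its pullback $(g')^* L_\chi = \chi_*((g')^* F_T)$ is trivial. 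The standard formula for degrees under finite pullback of line bundles on curves gives $0 = \deg((g')^* L_\chi) = n \cdot \deg(L_\chi)$, so $\deg(L_\chi) = 0$ since $n > 0$. As $X^*(T)$ separates the points of $X_*(T)$, this forces $\deg(F_T) = 0$.

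The main obstacles are routine bookkeeping: the dimensional reduction from an arbitrary proper surjective cover to a finite morphism of smooth projective curves, and the identification of the torsion subgroup of $\pi_1(G)$ with the kernel of $\pi_1(G) \to \pi_1(G^{\mathrm{ab}})$, which rests on the finiteness of the fundamental group of a semisimple group. Beyond these, the argument relies only on Theorem 1.2 and the naturality of the degree map under finite pullback.
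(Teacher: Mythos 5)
Your proof is correct, but it takes a genuinely different route from the paper's. The paper chooses a reduction of $F_G$ to a Borel $B$, pushes to a maximal torus, and argues directly with the cocharacter lattice: writing $\mu_{F_T}$ modulo the coroot lattice and using $\mu_{j^*F_T}=\deg(j)\,\mu_{F_T}\in\mz\Phi^\vee$ to conclude that $\deg(j)\cdot\deg(F_G)$ vanishes in $\pi_1(G)=X_*(T)/\mz\Phi^\vee$. You instead push forward to the abelianisation $G^{\mathrm{ab}}=G/[G,G]$, observe that $\ker\bigl(\pi_1(G)\to X_*(G^{\mathrm{ab}})\bigr)$ is finite (it is $(X_*(T)\cap \mq\Phi^\vee)/\mz\Phi^\vee$, a quotient of two full-rank lattices in $\mq\Phi^\vee$), and reduce everything to the degree formula for line bundles under a finite pullback together with the perfectness of the character--cocharacter pairing. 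Your version buys two things: it avoids invoking the existence of a Borel reduction of an arbitrary $G$-bundle on a curve (a nontrivial input that the paper uses without comment), and it avoids the paper's slightly loose decomposition $\mu_{F_T}=\sum a_i\alpha_i^\vee+\mu$ with ``$\mu\in X_*\setminus\Phi^\vee$'', which is not a canonical splitting. The paper's version is more explicit about the root datum and produces the torsion element by hand. Both arguments rest on the same two ingredients: a proper (or finite) cover trivialising the bundle, and the functoriality of the degree under pushforward along group homomorphisms (the commutative diagram of Lemma \ref{deg0push}), which you use implicitly when identifying $\deg(F_T)$ with the image of $\deg(F_G)$ and should cite. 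Your dimensional reduction from a proper surjective $Y\to X$ to a finite morphism of smooth projective curves is routine and works as stated; alternatively you could use directly the finite flat trivialising cover $F_\Gamma\to X$ furnished by the definition, as the paper does.
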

Again this generalises the case for $G=\text{GL}_n$, since in this case $\pi_1(G)=\mathbb{Z}$, which is torsion-free. We also show that if $X$ is an elliptic curve 
then all essentially finite $G$-bundles have degree 0.
%

Let now $M_{G}^{\ef,0}$ denote the $k$-points of the essentially finite $G$-torsors of degree 0, inside $M_{G}^{\Ss,0}$, and let $G=\text{GL}_n$. If $n=1$, then essentially finite $G$-bundles correspond to essentially finite line bundles, which correspond to torsion line bundles (see Lemma 3.1 \cite{Nori1}). Hence, $M_{\text{GL}_1}^{\ef}$ is dense inside $M_{\text{GL}_1}^{\Ss,0}=\text{Jac}^0(X)$ since torsion points are dense in any abelian variety. 
In positive characteristic 
Ducrohet and Mehta have shown that $M_{\text{GL}_n}^{\ef,0} \subset M_{\text{GL}_n}^{\Ss,0}$ is dense for all $n$ when $g\geq 2$, and similarly for vector bundles with trivial determinant (they show in fact that a smaller set of objects, called Frobenius periodic vector bundles, are dense; see \cite{frobdensity}). However, in characteristic zero much less seems to be known about the density of essentially finite bundles when the rank is greater than 1. Hence, we may ask whether $M_{\text{GL}_n}^{\ef,0}$ is dense in $M_{\text{GL}_n}^{\Ss,0}$ for $n>1$, when $\Char(k)=0$. More generally, we are interested in the question of whether $M_{G}^{\ef,0}$ is dense in $M_{G}^{\Ss,0}$ for arbitrary connected reductive groups $G$ over an arbitrary, algebraically closed field $k$.

If $g=0$, that is if $X\cong \mathbb{P}^1$, then it is well-known that $M_{G}^{\Ss,0}(k)$ is a singleton. Hence it is clear that every essentially finite $G$-torsor over $\mathbb{P}^1$ is trivial. We give a self-contained proof of this result using a Tannakian interpretation of both the classification of $G$-torsors over $\mathbb{P}^1$ (see \cite{torsorsprojective}) and the definition of essentially finite torsors. If $g=1$, that is if $X$ is an elliptic curve, then we prove that $M_{G}^{\ef,0}$ is dense in $M_{G}^{\Ss,0}$ for all connected, reductive groups. This follows from work of Fr{\u{a}}{\c{t}}il{\u{a}} \cite{dragos} and Laszlo \cite{laszlo}. On the contrary, if $g\geq 2$ and $\Char(k)=0$, then we show the following.
\begin{theorem}
Let $\Char(k)=0$. For all connected, reductive groups of semisimple rank 1, $M_{G}^{\ef,0} \subset M_{G}^{\Ss,0}$ is not dense. 
\end{theorem}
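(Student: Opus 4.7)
The plan is to reduce to $G=\mathrm{PGL}_2$, then use the classification of finite subgroups of $\mathrm{PGL}_2$ to exhibit a proper closed subvariety of $M_G^{\Ss,0}$ containing every essentially finite torsor.

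\textbf{Reduction.} Any connected reductive $G$ of semisimple rank one has adjoint group $G^{\mathrm{ad}}\cong \mathrm{PGL}_2$; the quotient $q:G\twoheadrightarrow G^{\mathrm{ad}}$ sends essentially finite torsors to essentially finite torsors, since a trivialising cover for $F_G$ trivialises the associated $G^{\mathrm{ad}}$-torsor. The induced morphism $M_G^{\Ss,0}\to M_{G^{\mathrm{ad}}}^{\Ss,0}$ is dominant onto the relevant component, so the preimage of any proper closed subvariety containing $M_{G^{\mathrm{ad}}}^{\ef,0}$ is a proper closed subvariety of $M_G^{\Ss,0}$ containing $M_G^{\ef,0}$. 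This reduces the problem to $G=\mathrm{PGL}_2$.

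\textbf{Characterisation.} By the characterisation in Theorem 1.2(4), an essentially finite $\mathrm{PGL}_2$-torsor in characteristic zero corresponds to a conjugacy class of continuous representations $\rho:\pi_1^{\text{et}}(X)\to\mathrm{PGL}_2(k)$ with finite image, and the possible images are the classical finite subgroups of $\mathrm{PGL}_2$: cyclic $C_n$, dihedral $D_n$, or one of the three polyhedral groups $A_4,S_4,A_5$.

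\textbf{Main argument.} For each type of image, I would show the corresponding essentially finite torsors lie in an explicit proper closed subvariety of $M_G^{\Ss,0}$. A cyclic representation factors through a maximal torus $\mathbb{G}_m\subset\mathrm{PGL}_2$ and the associated torsor is determined by a line bundle on $X$; the locus of all such torsors is the closed torus-reducible subvariety $\text{Jac}^0(X)/\{\pm 1\}\hookrightarrow M_G^{\Ss,0}$, of dimension $g$. A dihedral representation $\pi_1^{\text{et}}(X)\to D_n$ factors, via the index-two subgroup $C_n\subset D_n$, through a degree-two étale cover $\pi:Y\to X$ on which the pulled-back representation is abelian; the resulting torsor is the one induced by $\pi_\ast L$ for a line bundle $L\in\pic(Y)$ that is antisymmetric under the deck involution (equivalently, $L$ lies in a torsor for the Prym variety of $Y/X$), and as $L$ varies over this Prym torsor and $Y$ over the finite set of degree-two covers, we obtain a closed subvariety of dimension $g-1$. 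Each of $A_4,S_4,A_5$ is finite and admits only finitely many surjections from $\pi_1^{\text{et}}(X)$ up to conjugation, hence contributes only finitely many isolated points. The union of these loci is a closed subvariety of dimension at most $g$, and since $g<3(g-1)=\dim M_G^{\Ss,0}$ whenever $g\geq 2$, this is a proper closed subvariety, so $M_G^{\ef,0}$ is not dense.

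\textbf{Main obstacle.} The nontrivial step is showing that dihedral essentially finite torsors actually lie in the $(g-1)$-dimensional Zariski closure of the Prym-fibred family of induced bundles, rather than just forming a countable subset of $M_G^{\Ss,0}$ whose closure could a priori be larger. This should follow by constructing the subvariety as the image of the algebraic family $\{(Y\to X,\,L\in\pic(Y))\}$ and observing that every dihedral essentially finite torsor arises as $\pi_\ast L$ for torsion $L$ on some such $Y$; some bookkeeping is needed to identify the correct Prym coset and the $\{\pm 1\}$-equivalence on $L$. The hypothesis $\Char(k)=0$ is used twice, both in the finite-étale-cover characterisation from Theorem 1.2(4) and in the classification of finite subgroups of $\mathrm{PGL}_2(k)$, while $g\geq 2$ enters precisely as the strict inequality $g<3(g-1)$ that makes the dimension count close---consistent with the fact that the theorem genuinely fails for $g\leq 1$ and in positive characteristic.
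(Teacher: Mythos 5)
Your proposal is correct and follows essentially the same route as the paper: reduce to $\mathrm{PGL}_2$ via a dominant map on moduli spaces, invoke the classification of finite subgroups of $\mathrm{PGL}_2$, dispose of $A_4$, $S_4$, $A_5$ as finitely many points, and trap the cyclic and dihedral essentially finite torsors in a closed subvariety of dimension at most $g<3g-3$. The paper packages your Jacobian-plus-Prym family as the image of the finite-type morphism $M_{\mathrm{O}(2)}^{\Ss}\to M_{\mathrm{PGL}_2}^{\Ss,0}$ (all $\mu_n$ and $D_n$ embed in $\mathrm{O}(2)\subset\mathrm{PGL}_2$ up to conjugacy), which resolves your stated ``main obstacle'' of algebraicity in one stroke.
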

The main work lies in proving the theorem for $\text{PGL}_2$-torsors. Note also that this shows that $M_{\text{GL}_2}^{\ef}$ is not dense in $M_{\text{GL}_2}^{\Ss,0}$. In characteristic 0, Weissman \cite{dario} has independently obtained this non-density result for $M_{\text{GL}_n}^{\ef}$ for all $n\geq 1$.
%

By the theorem of Narasimhan and Seshadri, the points of 
$M_{\text{GL}_n}^{\Ss,0}(\mathbb{C})$  are also the isomorphism classes of representations $\pi_1(X)\longrightarrow \text{U}_n(\mathbb{C})$, i.e., there is an analytic homeomorphism between $M_{\text{GL}_n}^{\Ss,0}(\mathbb{C})$ and the character variety $\Hom(\pi_1(X),\text{U}_n(\mathbb{C}))/\sim$. In particular finite vector bundles correspond to unitary representations of $\pi_1(X)$ which factor through finite groups. As the Zariski topology is coarser than the analytic topology we see as a corollary to non-density for rank n vector bundles that the set of rank $n$ unitary representations of $\pi_1(X)$ which factor through finite groups is not dense inside $\Hom(\pi_1(X),\text{U}_n(\mathbb{C}))/\sim$.

The outline of the text is as follows. In Section \ref{section.background} we introduce the necessary notations and background. In Section \ref{section.ef} we define essentially finite $G$-torsors, generalising the notion of essentially finite vector bundles. We prove that such torsors are (strongly) semistable of torsion degree. Finally, in Section \ref{section.density} we prove the above mentioned statements about density of $M_{G}^{\ef,0}$ in $M_{G}^{\Ss,0}$.
\subsection{Acknowledgements}
We would like to thank our respective advisors, Carlo Gasbarri and Wushi Goldring, for their support during this project. We would also like to thank Drago{\c{s}} Fr{\u{a}}{\c{t}}il{\u{a}}, João Pedro dos Santos, Emiliano Ambrosi, Florent Schaffhauser, and Georgios Kydonakis for their remarks and questions that have greatly improved the quality of this work. Finally, we thank an anonymous referee for explaining to us that we could relax the assumptions in Proposition \ref{nori.ef.equiv} and how this gives a much simpler proof of the implication ``3. implies 1.'' in Theorem \ref{ef.via.reps} (which also allowed us to remove a restriction on the characteristic in an earlier version), as well as for several other useful comments that greatly improved the quality of the paper.
\section{Notations, conventions and background}\label{section.background}
Throughout the text, let $k$ be an algebraically closed field and let $X$ be a smooth, projective, connected curve over $k$. Recall that if $G$ denotes an algebraic group over $k$, then a $G$-torsor over $X$ is a scheme $F_G$ over $X$ with an action of $G$ such that there exists an fppf cover, $(U_i \to X)_{i \in I}$ such that for each $i\in I$ there is a $G|_{U_i}$-equivariant isomorphism  $F_G|_{U_i} \cong G|_{U_i}$. We will also use the term $G$-bundle as synonym for $G$-torsor. 
If $\varphi : H\to G$ is a group morphism and $F_H$ is an $H$-torsor, then we denote by $\varphi_{*}F_H$ the $G$-torsor $\varphi_{*}F_H\coloneqq F_H \times^{H} G$. In the special case when $\varphi : G \to \text{GL}_V$ is a representation of $G$, we denote $\varphi_{*}F_G$ by $V_{F_G}$ (following \cite{Schieder}). If $F_G$ is a $G$-torsor such that $F_G \cong \varphi_{*}F_H$ for some triple $(H,\varphi, F_H)$ as above, then we say that $F_G$ admits a reduction of structure group to $H$. 
We denote by $\rep_k(G)$ the category of finite-dimensional representations of $G$ over $k$. Recall that to give a $G$-torsor over $X$ is equivalent to give an exact, $k$-linear, tensor functor $\rep_k(G)\to \vect_X$, where $\vect_X$ denotes the category of vector bundles over $X$. We will use the same notation for the bundle seen as a functor.

Now suppose that $G$ is a connected, reductive group. Given a  maximal torus $T\subset G$ let $X^{*}(T)$ denote the characters of $T$ and let $X_{*}(T)$ denote the cocharacters. Let further $\Phi\subset X^{*}(T)$ denote the corresponding roots and let $\Phi^{\vee}\subset X_{*}(T)$ denote the corresponding coroots. We let $\pi_1(G)$ denote the algebraic fundamental group of $G$, namely, 
\begin{equation}
    \pi_1(G) = X_{*}(T)/\Span\{\Phi^{\vee}\}.
\end{equation}
Given a parabolic $P\subset G$ with Levi quotient $L$, let $\Phi_{L}^{\vee} \subset \Phi^{\vee}$ denote the coroots of $L$. We write $\pi_1(P) \coloneqq \pi_1(L)$. 

Let $\calm_G$ denote the stack of $G$-torsors over $X$, let $\calm_{G}^{\Ss}$ denote the substack of semistable $G$-torsors and let $M_{G}^{\Ss}$ denote the moduli space of semistable $G$-torsors (see \cite{ramana1}, \cite{ramana2} and \cite{modulispace.arb.char}). If we consider another curve, $Y$, then for clarity we may also write $\calm_{G,Y}$ to denote the stack of $G$-torsors over $Y$. We define $\calm_{G,Y}^{\Ss}$ and $M_{G,Y}^{\Ss}$ analogously.

Recall that the connected components of $\calm_{G}$ are labeled by $\pi_1(G)$, that is,
\begin{equation}
    \pi_0(\calm_{G}) = \pi_1(G).
\end{equation}
If $\check{\lambda} \in \pi_1(G)$, let $\calm_{G}^{\check{\lambda}}\subset \calm_{G}$ denote the corresponding component. Define similarly $\calm_{G}^{\Ss,\check{\lambda}}$ and $M_{G}^{\Ss,\check{\lambda}}$ to be the components in $\calm_{G}^{\Ss}$ respectively $M_{G}^{\Ss}$ corresponding to $\check{\lambda}$.
\begin{definition}
If $F_G$ is an object of $\calm_{G}^{\check{\lambda}}$, then $F_G$ is said to be of {\bf degree} $\check{\lambda}$.
\end{definition}
We also have that $\pi_0(\calm_P)\cong \pi_0(\calm_L) = \pi_1(P)$ and we similarly say that a $P$-torsor is of degree $\check{\lambda}_P$ if it lies in the component corresponding to $\check{\lambda}_P$.
\begin{lemma}\label{deg0push}
Suppose that $\varphi : G\to H$ is a morphism of smooth connected algebraic groups and let $F_G$ be a $G$-torsor of degree 0. Then $\varphi_{*}F_{G}$ has degree 0.
\end{lemma}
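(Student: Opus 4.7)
The plan is to argue purely topologically: the functor $\varphi_{*}$ defines a morphism of stacks $\calm_G \to \calm_H$ sending the trivial $G$-torsor to the trivial $H$-torsor, so by continuity it must send the connected component of the former to the connected component of the latter, which are by convention the degree $0$ components on each side.

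First, I would observe that the extension-of-structure-group construction $F_G \mapsto F_G \times^G H$ is functorial and compatible with base change, and hence defines a $1$-morphism of algebraic stacks $\varphi_{*}\colon \calm_G \to \calm_H$. In particular, the induced map on the underlying topological spaces is continuous, so it descends to a well-defined map
\begin{equation*}
\pi_0(\varphi_{*})\colon \pi_0(\calm_G) \longrightarrow \pi_0(\calm_H).
\end{equation*}
Second, I would identify the degree $0$ component of $\calm_G$ (resp.\ $\calm_H$) with the connected component containing the trivial torsor $X \times G$ (resp.\ $X \times H$). For connected reductive groups this is immediate from the explicit isomorphism $\pi_1(G) = X_{*}(T)/\Span\{\Phi^{\vee}\}$ and the fact that the trivial torsor reduces to the trivial $T$-torsor, which has cocharacter $0$; in the general smooth connected case the same statement holds by the chosen normalisation of the labelling of components.

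Third, a direct computation gives
\begin{equation*}
\varphi_{*}(X\times G) \;=\; (X\times G)\times^{G} H \;\cong\; X\times H,
\end{equation*}
so $\varphi_{*}$ carries the trivial $G$-torsor to the trivial $H$-torsor. Combining this with the two previous steps, if $F_G$ lies in the same connected component of $\calm_G$ as $X\times G$ (that is, has degree $0$), then $\varphi_{*}F_G$ lies in the same connected component of $\calm_H$ as $X\times H$, so $\varphi_{*}F_G$ has degree $0$.

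There is no real obstacle; the only slightly delicate point is making sure that the ``$0$'' label on $\pi_0(\calm_G)$ really is the class of the trivial torsor, but this is either built into the definition or, in the reductive case, verified via the identification with $X_{*}(T)/\Span\{\Phi^{\vee}\}$.
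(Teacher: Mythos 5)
Your argument is correct and is in substance the same as the paper's: both reduce the claim to the fact that $\varphi_{*}$ induces a map of \emph{pointed} sets $\pi_0(\calm_G)\to\pi_0(\calm_H)$ preserving the degree-$0$ base point (the paper cites Hoffmann's commutative square with the group morphism $\pi_1(G)\to\pi_1(H)$ on the left, while you verify base-point preservation directly by noting that $\varphi_{*}$ sends the trivial torsor to the trivial torsor). The only point to keep in mind is that your step identifying the degree-$0$ component with the component of the trivial torsor is exactly the normalisation built into the bijection $\pi_0(\calm_G)\cong\pi_1(G)$, which is what the cited diagram encodes.
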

\begin{proof}
By \cite{hoffman} we have a commutative diagram of pointed sets
\begin{equation}
    \begin{tikzcd}
    \pi_1(G) \arrow[r] \arrow[d] & \pi_0(\calm_G) \arrow[d] \\
    \pi_1(H) \arrow[r] & \pi_0(\calm_H),
    \end{tikzcd}
\end{equation}
where all morphisms are the natural ones induced by $\varphi$ and where the left vertical map is a group morphism. The statement follows.
\end{proof}
\begin{remark}
In particular, if $F_G$ is a $G$-bundle of degree 0 then $\deg V_{F_G}=0$ for all representations $V$ of $G$.
\end{remark}
%
%
%
%
%
%
%
%
%
%
\subsection{Semistable torsors}
Let $T$ be a maximal torus of $G$ and let $B\supset T$ be a Borel containing $T$. Then the center of $G$ can be described as 
\begin{equation}
    Z(G)=\bigcap_{\alpha\in \Phi} \ker(\alpha) \subset T.
\end{equation}
By composition via the inclusion $Z(G)\to T$ we have a natural map
\begin{equation}
    X_{*}(Z(G))\to X_{*}(T) \to \pi_1(G).
\end{equation}
Upon tensoring with $\mq$ this induces an isomorphism $X_{*}(Z(G))_{\mq}\cong \pi_1(G)_{\mq}$. 
Following \cite{Schieder} the definition of the slope map and subsequently the definition of a semistable $G$-torsor is as follows.
\begin{definition}
For a parabolic subgroup, $P$, such that $B\subset P \subset G$, with corresponding Levi $L$, the {\bf slope map} $\phi_P : \pi_1(P)\to X_{*}(T)_{\mq}$ is the map given by
\begin{equation}\label{phi.P}
    \phi_P : \pi_1(P)\to \pi_1(P)_{\mq}\cong X_{*}(Z(L))_{\mq} \to X_{*}(T)_{\mq}.
\end{equation}
\end{definition}
\begin{example}
For $G=\text{GL}_n$, we will describe the slope map $\phi_G$. We have that $L=G$ so $Z(L)=\scalar_n$, the scalar matrices of rank $n$. We also have the standard identifications $X_{*}(\scalar_n)\cong \mz$ and $X_{*}(T)\cong \mz^{n}$. Further, we may write $\pi_1(G)= \mz \cdot \overline{e}_1$, where $e_i : t \mapsto \diag(1,...,1,t,1,...,1)$ with $t$ in the $i^{\text{th}}$ position, and $\overline{(-)}$ represents the image in $\pi_1(G)$. Then we have that $\overline{(a,...,a)}= na \overline{e}_1$, hence the morphism $X_{*}(\scalar_n)\to \pi_1(G)$ is simply
\begin{equation}
\begin{aligned}
    X_{*}(\scalar_n) \to X_{*}(T) \to \pi_1(G) = \mz \overline{e}_1 \\
    a \mapsto (a,...,a) \mapsto \overline{(a,...,a)} = na \overline{e}_1,
\end{aligned}
\end{equation}
i.e., multiplication by $n$. Thus, upon tensoring with $\mq$ the morphism $\phi_G$ from (\ref{phi.P}) is given by
\begin{equation}
    \begin{aligned}
       \pi_1(G)&\to \pi_1(G)_{\mq} \xrightarrow{\cong} X_{*}(\scalar_n)_{\mq} \to X_{*}(T)\\
        a &\mapsto \frac{a}{1} \mapsto \frac{a}{n} \mapsto (\frac{a}{n}, ..., \frac{a}{n}).
    \end{aligned}
\end{equation}

Now let $P$ be an arbitrary parabolic of $G=\text{GL}_n$, with Levi factor $L = \prod_{i=1}^{m}\text{GL}_{n_i}$. Then $Z(L) = \prod_{i=1}^{m} \scalar_{n_i} \cong \mz^{m}$. The isomorphism $\pi_1(P)_\mq \to X_{*}(Z(L))_{\mq}$ is the inverse to the morphism
\begin{equation}
\begin{aligned}
    X_{*}(Z(L))\cong\mz^{m} &\to \mz^{n} \to \mz^{m}\cong\pi_1(P) \\
    (a_1, ..., a_m) &\mapsto (a_1, ..., a_1, a_2, ..., a_2, ..., a_m, ..., a_m) \mapsto (n_1 a_1, n_2 a_2, ..., n_m a_2),
\end{aligned}
\end{equation}
where $a_i$ occurs $n_i$ times in the tuple in the middle. Thus, the slope map $\phi_P$ is given by
\begin{equation}
\begin{aligned}
    \pi_1(P)&\to \pi_1(P)_{\mq}\cong X_{*}(Z(L))_{\mq} \to X_{*}(T)_{\mq} \\
    (a_1, ..., a_m) &\mapsto (\frac{a_1}{1}, ..., \frac{a_m}{1}) \mapsto (\frac{a_1}{n_1}, ..., \frac{a_m}{n_m}) \mapsto (\frac{a_1}{n_1},..., \frac{a_1}{n_1}, ..., \frac{a_m}{n_m}, ..., \frac{a_m}{n_m}).
\end{aligned}
\end{equation}
\end{example}
\begin{definition}\label{def.ss}
Let $F_G$ be a $G$–torsor of degree $\check{\lambda}$. We say that $F_G$ is {\bf semi-stable} if for each parabolic $P\subset G$ and each reduction $F_P$ of $F_G$ to $P$, of degree $\check{\lambda}_P$, we have that
\begin{equation}
    \phi_P(\check{\lambda}_P) \leq \phi_G(\check{\lambda}).
\end{equation}
\end{definition}
\begin{remark}
If $\phi_P(\check{\lambda}_P) < \phi_G(\check{\lambda})$ then $F_G$ is called {\bf stable}.
\end{remark}
\begin{example}
Again let $G=\text{GL}_n$, we show why this definition gives back the usual slope semi-stability for vector bundles. Recall first that the slope $\mu(E)$ of a vector bundle $E$ is defined as $\mu(E) = \frac{deg(E)}{rk(E)}$ and that $E$ is called slope semi-stable if for any subbundle $F$ we have that $\mu(F)\leq \mu(E)$. 

Let now $E$ be a vector bundle, let $P \subset G$ be a parabolic with Levi factor $L=\prod_{i=1}^{m}\text{GL}_{n_i}$ and let $F_P$ be a reduction of $E$ to $P$. This amounts to giving a filtration $0 \subset E_1 \subset ... \subset E_m = E$, where $\rk E_i - \rk E_{i-1} = n_i$. Then $\deg(F_P) = (\deg(\pi_{1,*}F_P), ..., \deg(\pi_{m,*}F_P))$ where $\pi_i : P \to L \to \text{GL}_{n_i}$ is the composition of the projections $P\to L$ and $L \to \text{GL}_{n_i}$. Then we see that
\begin{equation}
\begin{aligned}
    \phi_P(\deg(F_P)) &= (\frac{\deg(\pi_{1,*}F_P)}{n_1},..., \frac{\deg(\pi_{1,*}F_P)}{n_1}, ..., \frac{\deg(\pi_{m,*}F_P)}{n_m}, ..., \frac{\deg(\pi_{m,*}F_P)}{n_m}) \\
    &=(\mu(E_1), ..., \mu(E_1), ..., \mu(E_m/E_{m-1}), ..., \mu(E_m/E_{m-1})).
\end{aligned}
\end{equation}
Since $\phi_G(\deg(E))=(\mu(E), ..., \mu(E))$ we see that Definition \ref{def.ss} agrees with the usual slope semi-stability definition. 
\end{example}
Now we recall some results of \cite{Schieder} regarding the slope map which we will need to prove that essentially finite torsors are semi-stable. To this end, let $\lambda \in X^{*}(T)$ be a dominant character and let $V$ be a finite-dimensional $G$-representation of highest weight $\lambda$. If $P$ is a parabolic with Levi factor $L$, and if $V=\bigoplus_{\nu\in X^{*}(T)}V[\nu]$ is the weight space-decomposition of $V$, then let
\begin{equation}
    V[\lambda + \mz \Phi_{L}] \coloneqq \bigoplus_{\nu \in \lambda + \mz \Phi_{L}} V[\nu],
\end{equation}
where $\Phi_L$ are the roots of the Levi $L$. Then we have the following result.
\begin{proposition}[\cite{Schieder} Proposition 3.2.5(b),(c)]\label{schieder.prop.slope}
Keep the notation as above. Let $F_G$ be a $G$-torsor of degree $\check{\lambda}_G$. Then the slope of the vector bundle $V_{F_G}$ is given by
\begin{equation}
    \mu(V_{F_G}) = \langle \phi_G(\check{\lambda}_G), \lambda \rangle.
\end{equation}
Furthermore, if $F_P$ is a $P$-torsor of degree $\check{\lambda}_P$ with corresponding Levi bundle $F_L$, then the vector bundle $V[\lambda + \mz \Phi_L]_{F_L}$ has slope
\begin{equation}
    \mu(V[\lambda + \mz \Phi_L]_{F_L}) = \langle \phi_P(\check{\lambda}_P), \lambda \rangle.
\end{equation}
\end{proposition}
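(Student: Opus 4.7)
My plan is to reduce the first formula to the case where $F_G$ is the pushforward of a torus bundle, where the slope becomes computable from the weight decomposition of $V$, and then to derive the second formula by applying the first to the Levi $L$ equipped with the subrepresentation $V[\lambda + \mz\Phi_L]$. The two key algebraic inputs are Weyl-invariance of the character of a $G$-representation and the fact that all weights of an irreducible $G$-representation agree with the highest weight $\lambda$ upon restriction to $Z(G)^0$.

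For the first formula I would begin by noting that $\mu(V_{F_G})$ depends only on $\check{\lambda}_G$, since the degree of $V_{F_G}$ is locally constant on $\calm_G$ and the components of $\calm_G$ are indexed by $\pi_1(G)$; moreover the surjection $X_{*}(T) = \pi_1(T) \to \pi_1(G)$ realises every class by a torus pushforward, so I may assume $F_G = F_T \times^T G$. The weight decomposition then yields $V_{F_T} \cong \bigoplus_\nu V[\nu]\otimes L_\nu^{F_T}$ with $\deg L_\nu^{F_T} = \langle \deg F_T, \nu\rangle$, hence $\deg V_{F_T} = \langle \deg F_T, S_V\rangle$ for the sum of weights $S_V := \sum_\nu \dim V[\nu]\cdot \nu \in X^{*}(T)$. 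The crux is that $S_V$ is Weyl-invariant, since the character of a $G$-representation is Weyl-invariant; applying simple reflections forces $\langle \alpha^\vee, S_V\rangle = 0$ for every coroot $\alpha^\vee$. Simultaneously, any two weights of $V$ differ by an element of the root lattice, which vanishes on $Z(G)^0$, so $S_V|_{Z(G)^0} = \dim V \cdot \lambda|_{Z(G)^0}$ in $X^{*}(Z(G)^0)_\mq$. Writing $\deg F_T = \phi_G(\check{\lambda}_G) + \delta$ with $\delta \in \Span(\Phi^\vee)_\mq$ (both sides projecting to $\check{\lambda}_G$ in $\pi_1(G)_\mq$) and using that $\phi_G(\check{\lambda}_G) \in X_{*}(Z(G)^0)_\mq$, the pairing $\langle \deg F_T, S_V\rangle$ collapses to $\langle \phi_G(\check{\lambda}_G), S_V\rangle = \dim V \cdot \langle \phi_G(\check{\lambda}_G), \lambda\rangle$, giving the formula after dividing by $\dim V$.

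For the second formula I apply the first to the connected reductive group $L$ with representation $V[\lambda + \mz\Phi_L]$: this subspace is $L$-stable because $L$-root vectors shift weights within the coset $\lambda + \mz\Phi_L$. Every irreducible $L$-summand has highest weight in $\lambda + \mz\Phi_L$, hence agrees with $\lambda$ on $Z(L)^0$; since $\phi_L$ lands in $X_{*}(Z(L)^0)_\mq$, each summand has slope $\langle \phi_L(\check{\lambda}_L), \lambda\rangle$, and so does the whole bundle. Identifying $\pi_1(P) \cong \pi_1(L)$ with $\phi_P = \phi_L$ and $\check{\lambda}_P = \check{\lambda}_L$ from the definitions concludes. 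The step I expect to be most delicate is the collapse of $\langle \deg F_T, S_V\rangle$ to $\langle \phi_G(\check{\lambda}_G), S_V\rangle$ in the first part, where one has to orchestrate coroot-annihilation coming from Weyl invariance and the centrality of $\phi_G(\check{\lambda}_G)$ simultaneously; if complete reducibility of $V[\lambda + \mz\Phi_L]$ as an $L$-representation fails in positive characteristic, the decomposition step in the second part must be replaced by the same weight-sum argument applied directly to $L$.
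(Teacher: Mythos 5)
The paper does not prove this proposition at all --- it is imported verbatim from Schieder (Proposition 3.2.5(b),(c)) --- so there is no internal proof to compare against; your argument is a correct, self-contained verification, and its structure (reduce to the maximal torus, compute via the weight decomposition, kill the coroot ambiguity by Weyl invariance of the character, and collapse onto the centre using that all weights of $V$ lie in $\lambda+\mz\Phi$) is essentially the standard one underlying Schieder's proof. All the individual steps check out: the reduction to $F_G=F_T\times^{T}G$ is legitimate because $\deg V_{F_G}$ is the image of $\check{\lambda}_G$ under $\pi_1(G)\to\pi_1(\text{GL}_V)=\mz$ (this is exactly the commutative diagram of Lemma \ref{deg0push}), $\pi_1(T)\to\pi_1(G)$ is surjective, and $T$-torsors of every degree exist on a curve; the identity $\langle\alpha^{\vee},S_V\rangle=0$ follows from $s_\alpha(S_V)=S_V-\langle\alpha^{\vee},S_V\rangle\alpha$; and $\phi_G(\check{\lambda}_G)$ and $\deg F_T$ differ by an element of $\Span(\Phi^{\vee})_{\mq}$ because both lift $\check{\lambda}_G$ to $X_{*}(T)_{\mq}$. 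Two small points worth making explicit if you write this up: (i) the hypothesis you actually use is that every weight of $V$ lies in $\lambda+\mz\Phi$, which holds because ``highest weight $\lambda$'' means all weights are $\leq\lambda$ in the dominance order, and likewise in part two all weights of $V[\lambda+\mz\Phi_L]$ restrict to $\lambda|_{Z(L)^{0}}$ by construction; (ii) as you note yourself, you should not invoke complete reducibility of $V[\lambda+\mz\Phi_L]$ over $L$ (false in positive characteristic) --- but this is harmless since your weight-sum computation applied directly to the $L$-representation $V[\lambda+\mz\Phi_L]$, using $W_L$-invariance of its $T$-character and $\phi_P(\check{\lambda}_P)\in X_{*}(Z(L))_{\mq}$, already gives the second formula without any decomposition into irreducibles.
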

%
%
%
%
%
%
\section{Essentially finite torsors}\label{section.ef}
We begin with the main object of study in this article.
\begin{definition}
An {\bf essentially finite} $G$-torsor is a $G$-torsor over $X$ which admits a reduction to a finite group.
\end{definition}
\begin{remark}
Although we have fixed a smooth, projective, connected curve $X$ over $k$ for simplicity of the exposition, this definition makes sense over an arbitrary scheme. Similarly, we may use the same definition for arbitrary affine groups, not necessarily connected reductive.
\end{remark}
\begin{remark}
Note that if $\varphi : \Gamma \to G$ is a map from a finite group $\Gamma$, then we obtain an injection $\tilde{\varphi} : \Gamma/\ker(\varphi) \hookrightarrow G$. If $F_\Gamma$ is a $\Gamma$-torsor, then $\varphi_{*}F_\Gamma=\tilde{\varphi}_{*}(\pi_{*}F_\Gamma)$ as $G$–torsors, so we can always assume $\Gamma$ to be a subgroup of $G$.
\end{remark}
%
\begin{example}
\begin{enumerate}
    \item The trivial $G$-torsor $G\times X$ is essentially finite since it admits a reduction to the trivial group.
    \item If $\Gamma$ is finite then every $\Gamma$-torsor $F_\Gamma$ is essentially finite since $F_\Gamma\cong \id_{*}F_\Gamma$.
    \item Note that if $\alpha: G\to G'$ is a morphism of algebraic groups and $F_G$ is an essentially finite $G$-torsor, then $\alpha_{*}F_G$ is an essentially finite $G'$-torsor.
\end{enumerate}
\end{example}
%
Let us phrase two equivalent conditions for a $G$-bundle to be essentially finite; one in terms of the Nori fundamental group, and one Tannakian interpretation. Since $k$ is algebraically closed, there is a rational point $x$ of $X$. Let $\pi_{1}^{N}(X,x)$ denote the Nori fundamental group of $X$ and let $\widetilde{X}$ denote the universal $\pi_{1}^{N}(X,x)$-torsor over $X$, introduced in \cite{Nori1}.
\begin{proposition}\label{nori.ef.equiv}
A $G$-bundle $F_G$ is essentially finite if and only if there exists a morphism $\rho\colon \pi_{1}^{N}(X,x)\to G$ such that $\rho_{*}\widetilde{X}\cong F_G$. 
\end{proposition}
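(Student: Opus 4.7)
The plan is to prove both directions of the equivalence by combining the universal property of $\widetilde{X}$ with the observation that any homomorphism from a profinite group scheme to an affine algebraic group of finite type factors through a finite quotient.

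For the implication $(\Leftarrow)$, I will start with $\rho\colon \pi_{1}^{N}(X,x)\to G$ and consider its scheme-theoretic image $H\subset G$. Since $\mathcal{O}(G)$ is a finitely generated $k$-algebra while $\mathcal{O}(\pi_{1}^{N}(X,x))$ is the filtered colimit of the coordinate rings of the finite group-scheme quotients of $\pi_{1}^{N}(X,x)$, the comorphism $\rho^{*}$ lands inside the coordinate ring of some finite quotient. Hence $H$ is a finite group scheme, and $\rho$ factors as $\pi_{1}^{N}(X,x)\twoheadrightarrow H \hookrightarrow G$. Pushing $\widetilde{X}$ forward along the surjection produces an $H$-torsor $F_H$, and the identification $F_H\times^{H}G \cong \rho_{*}\widetilde{X}\cong F_G$ exhibits $F_G$ as admitting a reduction to the finite group $H$.

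For the implication $(\Rightarrow)$, I will begin with a reduction $F_G\cong F_\Gamma\times^{\Gamma}G$, where $\Gamma\hookrightarrow G$ is a finite group scheme and $F_\Gamma$ is a $\Gamma$-torsor. To invoke the universal property of $\widetilde{X}$ I need to rigidify $F_\Gamma$ at $x$, that is, to pick a $k$-point $y\in F_{\Gamma,x}(k)$. The fiber $F_{\Gamma,x}$ is a $\Gamma$-torsor over $\Spec k$; since $k$ is algebraically closed, $H^{1}_{\text{fppf}}(\Spec k,\Gamma)=0$ (checked by treating the étale and infinitesimal parts of $\Gamma$ separately via Kummer and Artin--Schreier type sequences), so such a point $y$ exists. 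The pointed $\Gamma$-torsor $(F_\Gamma,y)$ then corresponds by Nori's universal property to a unique morphism $\rho_0\colon \pi_{1}^{N}(X,x)\to \Gamma$ with $\rho_{0,*}\widetilde{X}\cong F_\Gamma$; composing with $\Gamma\hookrightarrow G$ gives a $\rho$ satisfying $\rho_{*}\widetilde{X}\cong F_\Gamma\times^{\Gamma}G\cong F_G$.

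The step I expect to require the most care is the rigidification in the forward direction when $\Gamma$ is non-reduced, which is also presumably the origin of the characteristic restriction removed in the final version. A reader who prefers to bypass the cohomological vanishing could argue Tannakianly: $F_G$ defines a tensor functor $\rep_{k}(G)\to \vect_X$ that factors through the category of essentially finite vector bundles, since each associated bundle arises from the finite group-scheme torsor $F_\Gamma$ and a finite-dimensional $\Gamma$-representation, and so is essentially finite by Nori's characterization. Combining this factorization with the canonical fiber functor at $x$ produces $\rho$ directly via Tannakian duality, and the resulting $\rho_{*}\widetilde{X}$ then has the same associated tensor functor as $F_G$, hence is isomorphic to it.
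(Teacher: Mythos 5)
Your proof is correct and follows essentially the same route as the paper's: the forward direction rigidifies $F_\Gamma$ at a point over $x$ and invokes Nori's universal property (the paper cites \cite[Proposition 3.11]{Nori1}), and the converse factors $\rho$ through a finite quotient of $\pi_1^N(X,x)$ using that $\mathcal{O}(G)$ is finitely generated while $\pi_1^N(X,x)$ is a pro-finite limit. The only cosmetic difference is that your cohomological vanishing for the existence of $y$ is heavier than needed (a nonempty finite $k$-scheme over an algebraically closed field already has a rational point), and your closing Tannakian remark is in fact the argument the paper uses elsewhere, in the proof that condition 3 implies condition 1 of Theorem \ref{ef.via.reps}.
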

\begin{proof}
Let $F_G$ be an essentially finite $G$-torsor, let $\iota \colon \Gamma \hookrightarrow G$ be a finite subgroup of $G$ and let $j \colon F_{\Gamma}\to X$ be a $\Gamma$-torsor such that $\iota_{*}F_{\Gamma}\cong F_G$. Let $y$ be a rational point of $F_{\Gamma}$ such that $j(y)=x$. Then $j$ defines a pointed finite torsor $(F_{\Gamma},y)\to (X,x)$. By \cite[Proposition 3.11]{Nori1}, there is a morphism $\pi_{1}^{N}(X,x)\to \Gamma$, which we compose with $\iota$ to get a morphism $\rho\colon \pi_{1}^{N}(X,x) \to G$  such that $F_G\cong \rho_{*}\widetilde{X}$.

Conversely, suppose that we have a morphism $\rho\colon \pi_1^N(X,x)\to G$ 
such that $\rho_{*}\widetilde{X}\cong F_G$. Since $\pi_1^N(X,x)=\varprojlim_{i}A_i$ is the inverse limit of its finite quotients $A_i$ (see \cite{Nori2}), there is 
some $i$ and a morphism $\rho_i \colon A_i \to G$ such that $\rho$ factors 
\begin{equation}
    \rho\colon \pi_1^N(X,x)\xrightarrow{\pi_i} A_i \xrightarrow{\rho_i} G
\end{equation}
where $\pi_i$ is the projection. Since $\rho_{*}\widetilde{X}\cong \rho_{i,*}(\pi_{i,*}\widetilde{X})$ we see that $F_G$ is essentially finite.
\end{proof}
\begin{proposition}\label{ef.tannaka}
A $G$-torsor $F_G$ is essentially finite if and only if there exists a finite group $\Gamma$, a $\Gamma$-torsor $F_\Gamma$, and a tensor functor $\alpha :\rep_k(G)\to \rep_k(\Gamma)$ such that : 
\begin{enumerate}
    \item we have that $\omega_\Gamma \circ \alpha = \omega_G$, where $\omega_G : \rep_k(G)\to \vect_k$ and $\omega_\Gamma : \rep_k(\Gamma)\to \vect_k$ are the forgetful functors; and
    \item we have a commutative diagram
    \begin{equation}
        \begin{tikzcd}
        \rep_k(G) \arrow[r, "F_G"] \arrow[d, "\alpha"'] & \text{Vec}_X \\
        \rep_k(\Gamma) \arrow[ru, "F_\Gamma"'] & 
        \end{tikzcd}
    \end{equation}
\end{enumerate}
\end{proposition}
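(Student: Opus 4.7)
The plan is to show the two directions separately, with the Tannakian reconstruction of group scheme morphisms from fiber-functor-preserving tensor functors doing the main work for the nontrivial direction.

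For the forward implication, assume $F_G$ is essentially finite. By the preceding remark we may choose a finite subgroup $\iota\colon \Gamma\hookrightarrow G$ and a $\Gamma$-torsor $F_\Gamma$ with $F_G\cong \iota_{*}F_\Gamma$. I would then define $\alpha\coloneqq \iota^{*}\colon \rep_k(G)\to \rep_k(\Gamma)$, the restriction functor, which is visibly a $k$-linear exact tensor functor. Condition (1) is immediate because restriction of representations does not alter the underlying vector space, so $\omega_\Gamma\circ \iota^{*}=\omega_G$. For condition (2), I would verify $F_\Gamma(\iota^{*}V)\cong F_G(V)$ naturally in $V$ via the associativity of the contracted product:
\begin{equation}
    F_\Gamma\times^{\Gamma}(\iota^{*}V)\;=\;F_\Gamma\times^{\Gamma}V\;\cong\;(F_\Gamma\times^{\Gamma}G)\times^{G}V\;=\;(\iota_{*}F_\Gamma)\times^{G}V\;\cong\;F_G\times^{G}V.
\end{equation}

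For the reverse implication, assume we are given $(\Gamma,F_\Gamma,\alpha)$ satisfying (1) and (2). By Tannakian duality (Deligne--Milne, or \cite[Ch.~6]{TAMAS}), an exact $k$-linear tensor functor $\alpha\colon \rep_k(G)\to \rep_k(\Gamma)$ with $\omega_\Gamma\circ\alpha=\omega_G$ corresponds to a unique morphism of affine group schemes $\iota\colon \Gamma\to G$ such that $\alpha\cong \iota^{*}$. Applying this to our $\alpha$, condition (2) then says that the two tensor functors $F_G$ and $F_\Gamma\circ \iota^{*}\colon \rep_k(G)\to \vect_X$ are tensor-isomorphic. But by the Tannakian description of torsors recalled in Section~\ref{section.background}, the functor $F_\Gamma\circ\iota^{*}$ is precisely (i.e.\ corresponds to) the pushforward torsor $\iota_{*}F_\Gamma$, so $F_G\cong \iota_{*}F_\Gamma$. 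By the remark, we may factor $\iota$ through $\Gamma/\ker(\iota)\hookrightarrow G$, producing a reduction of $F_G$ to an honest finite subgroup of $G$, which is exactly essentially finiteness.

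The step I expect to bear the most weight is the invocation of Tannakian reconstruction in the reverse direction: one has to be careful that the hypothesis $\omega_\Gamma\circ\alpha=\omega_G$ (rather than merely a natural isomorphism) is what allows one to extract a genuine group morphism $\Gamma\to G$, and that the comparison isomorphism in (2) is required as an isomorphism of tensor functors in order to recover $F_G\cong \iota_{*}F_\Gamma$ as $G$-torsors (and not just pointwise agreement on vector bundles). Once these Tannakian points are handled, the rest of the argument is essentially bookkeeping with the functor-of-points description of torsors.
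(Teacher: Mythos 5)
Your proposal is correct and follows essentially the same route as the paper: the forward direction takes $\alpha$ to be the restriction functor induced by the inclusion $\Gamma\hookrightarrow G$, and the reverse direction invokes Tannakian reconstruction (the paper cites \cite[Corollary 2.9]{Tannaka}) to recover a group morphism $\Gamma\to G$ from $\alpha$ and then identifies $F_G$ with the pushforward torsor. Your write-up simply spells out the details (the contracted-product computation and the role of the strict equality $\omega_\Gamma\circ\alpha=\omega_G$) that the paper leaves implicit.
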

\begin{proof}
If $F_G$ is essentially finite, coming from a finite group $\Gamma$, a group morphism $\varphi: \Gamma \to G$ and a $\Gamma$-torsor $F_\Gamma$, then we take $\alpha$ to be the induced functor from $\varphi$. Conversely, every such $\alpha$, by \cite[Corollary 2.9]{Tannaka}, comes from a group morphism $\varphi : \Gamma \to G$. 
\end{proof}
\begin{remark}
If a $G$-torsor $F_G$ is essentially finite then there exists a finite group $\Gamma$ and a $\Gamma$-torsor $j_\Gamma : F_\Gamma \to X$ such that $j_{\Gamma}^{*}F_G$ is trivial.
\end{remark}
%
%
%
%
%
\begin{proposition}\label{fintorsor.finvector}
Under the correspondence between vector bundles of rank $n$ and $\text{GL}_n$-torsors, a $\text{GL}_{n}$-torsor is essentially finite if and only if the corresponding vector bundle is essentially finite.
\end{proposition}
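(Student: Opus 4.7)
The proof will reduce to Nori's characterisation of essentially finite vector bundles, namely that a vector bundle $E$ of rank $n$ on $X$ is essentially finite if and only if there exist a finite group scheme $\Gamma$, a $\Gamma$-torsor $F_\Gamma$ on $X$, and an $n$-dimensional $\Gamma$-representation $V$ such that $E \cong F_\Gamma \times^{\Gamma} V$ (this is the statement cited from \cite{Nori1} in the introduction, and used implicitly in Proposition \ref{nori.ef.equiv}). The key compatibility will be that the correspondence between rank-$n$ vector bundles and $\text{GL}_n$-torsors (frame bundle $\leftrightarrow$ associated bundle via the standard representation) is functorial in group homomorphisms, so that pushing a $\Gamma$-torsor forward along a representation $\Gamma \to \text{GL}_n$ produces exactly the frame bundle of the associated rank-$n$ vector bundle.

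For the ``only if'' direction, suppose $F_{\text{GL}_n}$ admits a reduction to a finite group, that is, there is a finite group scheme $\Gamma$, a homomorphism $\varphi\colon \Gamma \to \text{GL}_n$, and a $\Gamma$-torsor $F_\Gamma$ with $F_{\text{GL}_n} \cong \varphi_* F_\Gamma = F_\Gamma \times^\Gamma \text{GL}_n$. Letting $V$ be the standard $n$-dimensional representation of $\text{GL}_n$ viewed as a $\Gamma$-representation via $\varphi$, the associated vector bundle is
\begin{equation}
E = F_{\text{GL}_n} \times^{\text{GL}_n} V \;\cong\; (F_\Gamma \times^\Gamma \text{GL}_n)\times^{\text{GL}_n} V \;\cong\; F_\Gamma \times^\Gamma V,
\end{equation}
which matches Nori's description and so $E$ is essentially finite.

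For the ``if'' direction, let $E$ be essentially finite of rank $n$ and let $F_{\text{GL}_n}$ be its frame bundle. By Nori's characterisation there exist a finite group scheme $\Gamma$, a $\Gamma$-torsor $F_\Gamma$, and an $n$-dimensional $\Gamma$-representation $V$ with $E \cong F_\Gamma\times^\Gamma V$. Choosing an isomorphism $V\cong k^n$ turns the representation into a homomorphism $\varphi\colon \Gamma \to \text{GL}(V)\cong \text{GL}_n$. I will then verify that $\varphi_* F_\Gamma = F_\Gamma \times^\Gamma \text{GL}_n$ is a $\text{GL}_n$-torsor whose associated vector bundle (under the standard representation) is $F_\Gamma\times^\Gamma V\cong E$. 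By the equivalence of categories between rank-$n$ vector bundles and $\text{GL}_n$-torsors, this forces $F_{\text{GL}_n}\cong \varphi_* F_\Gamma$, exhibiting a reduction of $F_{\text{GL}_n}$ to the finite group $\Gamma$.

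The only nontrivial step is this last compatibility, i.e.\ that the frame bundle construction commutes with pushforward along $\varphi\colon \Gamma \to \text{GL}_n$. This is a routine unravelling of the associated-bundle construction (alternatively, one can phrase both sides in terms of the Tannakian description of torsors of Proposition \ref{ef.tannaka}: the functor $\rep_k(\text{GL}_n)\to \vect_X$ attached to $\varphi_* F_\Gamma$ is the composite of $\varphi^*\colon \rep_k(\text{GL}_n)\to \rep_k(\Gamma)$ with the functor defining $F_\Gamma$, which sends the standard representation to $F_\Gamma\times^\Gamma V$). I will include this verification briefly but expect no real obstacle there.
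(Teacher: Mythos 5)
Your proof is correct. The converse direction (essentially finite vector bundle $\Rightarrow$ essentially finite torsor) is identical to the paper's: invoke Nori's normal form $E\cong F_\Gamma\times^\Gamma V$, promote the representation to a homomorphism $\Gamma\to \text{GL}_n$, and use the contracted-product identity $F_\Gamma\times^\Gamma \text{GL}_n\times^{\text{GL}_n}\mathbb{A}^n\cong F_\Gamma\times^\Gamma \mathbb{A}^n$ to identify $\iota_*F_\Gamma$ as the frame bundle of $E$. The forward direction is where you diverge: the paper observes that $F_{\text{GL}_n}=\alpha_*F_\Gamma$ is trivialised by the finite cover $j\colon F_\Gamma\to X$, hence so is the associated bundle $E$, and then appeals to the fact that a vector bundle trivialised by a finite torsor is essentially finite (ultimately the criterion of Biswas--dos Santos, or one direction of Nori). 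You instead run the contracted-product identity in the forward direction as well, landing directly on Nori's normal form $E\cong F_\Gamma\times^\Gamma V$. Your version is more symmetric --- both implications reduce to the same associativity isomorphism and the same citation of Nori --- and avoids invoking the ``trivialised by a proper cover'' characterisation; the paper's version is shorter given that that characterisation is already in its toolkit. Either way the content is the same routine compatibility, and your flagged ``only nontrivial step'' (frame bundle commutes with pushforward along $\varphi$) is exactly the displayed isomorphism the paper writes out, so there is no gap.
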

\begin{proof}
Let $F_{\text{GL}_n}$ be a $\text{GL}_{n}$-torsor, and let $\Gamma$ be a finite subgroup of $\text{GL}_n$, $\alpha : \Gamma \to \text{GL}_{n}$ and let $j : F_\Gamma\to X$ be a $\Gamma$-torsor such that $F_{\text{GL}_n} = \alpha_{*}F_\Gamma$. Then  $F_{\text{GL}_n}$ is trivialised by $j : F_\Gamma \to X$ so the corresponding vector bundle $E$ is also trivialised by $j : F_\Gamma\to X$. Thus, $E$ is essentially finite.
%
%

Conversely suppose $E$ is an essentially finite vector bundle. Then there is a finite group $\iota : \Gamma\to \text{GL}_n$ and a $\Gamma$-torsor $F_\Gamma\to X$ such that $E=F_\Gamma\times^{\Gamma}\ma^n$. Then we have that
\begin{equation}
    E = F_\Gamma\times^{\Gamma}\ma^n \cong F_\Gamma\times^{\Gamma} \text{GL}_n \times^{\text{GL}_n} \ma^n \cong \iota_{*}F_\Gamma\times^{\text{GL}_n} \ma^n,
\end{equation}
whence the vector bundle associated to $\iota_{*}F_\Gamma$ is $E$. Hence, the bundle corresponding to $E$ is isomorphic to $\iota_{*}F_\Gamma$, hence essentially finite. 
\end{proof}
\begin{lemma}\label{triviality.via.faithful.reps}
Let $Y$ be a proper and connected scheme over $k$. A $G$-bundle $F_G$ over $Y$ is trivial if and only if for any faithful representation $\rho\colon G\to GL_V$, $\rho_*F_G$ is trivial.
\end{lemma}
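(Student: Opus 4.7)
The forward direction is immediate: pushing forward the trivial bundle $Y\times G$ along $\rho$ produces $Y\times GL_V$, which is trivial. The plan for the converse is to translate the triviality of $F_G$ into the constancy of a classifying map $\sigma\colon Y\to GL_V/G$, and then exploit the properness of $Y$ together with the affineness of $GL_V/G$.

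Fix a faithful representation $\rho\colon G\hookrightarrow GL_V$ (a closed immersion, since $G$ is affine) and assume $\rho_*F_G$ is trivial. Reductions of the $GL_V$-bundle $\rho_*F_G$ to the subgroup $G$ are in bijection with global sections of the associated fiber bundle $\rho_*F_G\times^{GL_V}(GL_V/G)$. A trivialization $\rho_*F_G\cong Y\times GL_V$ identifies this associated bundle with $Y\times (GL_V/G)$, so the reduction $F_G\hookrightarrow \rho_*F_G$ corresponds to a morphism $\sigma\colon Y\to GL_V/G$; concretely, $\sigma$ is the descent along $F_G\to Y$ of the $G$-equivariant composition $F_G\hookrightarrow \rho_*F_G\xrightarrow{\sim} Y\times GL_V\twoheadrightarrow GL_V$.

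The key geometric input is that $GL_V/G$ is an affine variety: since $G$ is connected reductive (the standing assumption of the paper), this is Matsushima's theorem, extended to arbitrary characteristic by Haboush and Richardson. Because $Y$ is proper and connected with $\Gamma(Y,\mathcal{O}_Y)=k$, any morphism from $Y$ into an affine scheme is constant at some $k$-point. Hence $\sigma$ factors through a single coset $g_0G\in GL_V/G$, and the reduction it encodes is the pullback $\sigma^*(GL_V\to GL_V/G)\cong Y\times g_0G\cong Y\times G$. Therefore $F_G$ is trivial.

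The main obstacle is precisely the appeal to Matsushima's theorem, which is what forces the reductivity hypothesis on $G$; the remainder of the argument is a formal manipulation of associated bundles and sections. An alternative route, avoiding Matsushima, would be to work directly with the $G$-equivariant morphism $\phi\colon F_G\to GL_V$ constructed above and try to show its image lies in a single right $G$-orbit, but this seems to need essentially the same input.
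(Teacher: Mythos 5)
Your argument is correct and is essentially the paper's proof: the paper packages the same reasoning as the exact sequence of pointed sets $GL_V(Y)\to (GL_V/G)(Y)\xrightarrow{\delta} H^1(Y,G)\to H^1(Y,GL_V)$, where your classifying map $\sigma$ is exactly the $Y$-point of $GL_V/G$ whose image under $\delta$ is $F_G$, and both proofs then invoke affineness of $GL_V/G$ (Matsushima/Haboush--Richardson) plus properness and connectedness of $Y$ to make that point constant and lift it to $GL_V$. The only cosmetic difference is your extra assumption $\Gamma(Y,\mathcal{O}_Y)=k$, which the paper implicitly uses in the same way when asserting that $y$ factors through $\Spec k$.
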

\begin{proof}
The idea of this can be found in \cite[Lemma 4.5]{bisdos.via.referee}, but we spell out the details since their assumptions on the base scheme are different from ours. Suppose that $\rho\colon G\to GL_V$ is any faithful representation. Consider the long exact sequence of pointed sets (see \cite[III, \S 4, 4.6]{demazure.gabriel})
\begin{equation}\label{subgroups.les.h1}
    1\to G(Y)\xrightarrow{\rho} GL_V(Y) \xrightarrow{\pi} (GL_V/G)(Y) \xrightarrow{\delta} H^1(Y,G)\xrightarrow{\rho_*} H^1(Y,GL_V),
\end{equation}
where $\pi\colon GL_V\to GL_V/G$ is the canonical projection. 
The morphism $\delta$ takes a $Y$-point $y\colon Y\to GL_V/G$ to the $G$-bundle $\delta(y)\coloneqq Y\times_{GL_V/G,y,\pi}GL_V$. Since $G$ is reductive, $GL_V/G$ is affine and hence, using that $Y$ is proper and connected, $y$ is constant. That is, we have a factorisation $y\colon Y\to \Spec k \to GL_V/G$. Since $k$ is algebraically closed, $(GL_V/G)(k)=GL_V(k)/G(k)$, and hence $y$ being constant implies that there is a lift $\tilde{y}\colon Y\to GL_V$ of $y$. By the universal propery of fiber products we thus see that $\delta(y)$ admits a section, whence $\delta(y)$ is trivial. Hence, by exactness of \eqref{subgroups.les.h1} a $G$-bundle $F_G$ is trivial if and only if $\rho_*F_G$ is trivial.  
\end{proof}
%
%
%
%
\begin{theorem}\label{ef.via.reps}
Let $G$ be a connected, reductive group and let $F_G$ be a $G$-bundle. 
The following are equivalent.
\begin{enumerate}
    \item The $G$-bundle $F_G$ is essentially finite.
    \item There exists a faithful representation $\rho\colon G\to \text{GL}_V$ such that $\rho_{*}F_G$ is an essentally finite vector bundle.
    \item For every representation $\rho\colon G\to \text{GL}_V$, $\rho_{*}F_G$ is an essentally finite vector bundle.
    \item There exists a proper surjective morphism $f\colon Y \to X$ such that $f^{*}F_G$ is trivial.
\end{enumerate}
\end{theorem}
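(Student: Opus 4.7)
I will prove the cycle $(1)\Rightarrow(3)\Rightarrow(2)\Rightarrow(4)\Rightarrow(3)\Rightarrow(1)$. The arrows $(1)\Rightarrow(3)\Rightarrow(2)\Rightarrow(4)$ are essentially formal; the substance sits in $(4)\Rightarrow(3)$, which upgrades the known characterisation of essentially finite vector bundles, and in the key Tannakian step $(3)\Rightarrow(1)$.

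For the formal arrows: $(1)\Rightarrow(3)$ holds because if $F_G\cong\iota_{*}F_{\Gamma}$ for a finite subgroup $\iota\colon\Gamma\hookrightarrow G$ and $\rho\colon G\to\text{GL}_V$ is any representation, then $\rho_{*}F_G\cong(\rho\iota)_{*}F_{\Gamma}$ reduces to the finite group $\Gamma$ and hence, by Proposition \ref{fintorsor.finvector}, corresponds to an essentially finite vector bundle; $(3)\Rightarrow(2)$ uses that $G$ admits a faithful representation; and for $(2)\Rightarrow(4)$, Proposition \ref{fintorsor.finvector} produces a finite $\Gamma$-torsor $j\colon F_{\Gamma}\to X$ trivialising $\rho_{*}F_G$, from which, since $X$ is irreducible and $F_{\Gamma}$ is finite (hence proper), one extracts a connected component $Y\subseteq F_{\Gamma}$ still mapping properly and surjectively onto $X$; on this $Y$ the vector bundle $\rho_{*}(f^{*}F_G)\cong f^{*}(\rho_{*}F_G)$ is trivial, so Lemma \ref{triviality.via.faithful.reps} forces $f^{*}F_G$ itself to be trivial.

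For $(4)\Rightarrow(3)$: for every representation $V$ of $G$ we have $f^{*}(V_{F_G})\cong V_{f^{*}F_G}$, which is trivial, so $V_{F_G}$ is trivialised by a proper surjective morphism. Applying the characterisation of essentially finite vector bundles as those trivialised by a proper surjective cover (see \cite{bisdos.via.referee}) yields (3).

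For the key step $(3)\Rightarrow(1)$: condition (3) says the tensor functor $F_G\colon\rep_k(G)\to\vect_X$ factors through the full Tannakian subcategory $\vect_X^{\ef}$ of essentially finite vector bundles. Fixing a point $\tilde{x}$ of $F_G$ over $x$ supplies a tensor isomorphism $\omega_x\circ F_G\cong\omega_G$, where $\omega_x$ denotes the fibre at $x$ and $\omega_G$ the forgetful functor. Via Nori's equivalence $\vect_X^{\ef}\simeq\rep_k(\pi_{1}^{N}(X,x))$ this rewrites $F_G$ as a tensor functor $\rep_k(G)\to\rep_k(\pi_{1}^{N}(X,x))$ compatible with forgetful fibre functors. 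Tannaka duality (\cite[Corollary 2.9]{Tannaka}) then produces a morphism of group schemes $\rho\colon\pi_{1}^{N}(X,x)\to G$ with $F_G\cong\rho_{*}\widetilde{X}$, and Proposition \ref{nori.ef.equiv} concludes that $F_G$ is essentially finite. The main obstacle is this final Tannakian identification: it requires Proposition \ref{nori.ef.equiv} to apply to arbitrary morphisms from $\pi_{1}^{N}(X,x)$ into a connected reductive $G$, which is precisely the relaxation of hypotheses noted in the acknowledgements and which is what makes this implication go through cleanly.
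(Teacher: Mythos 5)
Your proof is correct and takes essentially the same route as the paper: the easy arrows are handled identically, your $(2)\Rightarrow(4)$ is exactly the paper's $(2)\Rightarrow(3)$ argument (reduce $\rho_{*}F_G$ to a finite group, pass to a connected component of the trivialising torsor, apply Lemma \ref{triviality.via.faithful.reps}), your $(4)\Rightarrow(3)$ is the Biswas--dos Santos trivialisation criterion the paper quotes as $(3)\Leftrightarrow(4)$, and your $(3)\Rightarrow(1)$ is the same Tannakian factorisation through $\vect_X^{\ef}$ followed by Proposition \ref{nori.ef.equiv}. The only differences are the cosmetic reordering of the cycle and that your appeal to the proper-surjective-trivialisation characterisation of essentially finite vector bundles should point to the Biswas--dos Santos reference rather than the one you cite.
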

\begin{proof}
By above we see that 1. implies 3., and it is clear that 3. implies 2.  
By \cite{bds} 4. is equivalent to 3. Hence we prove that 2. implies 3. and that 3. implies 1.

First suppose that 2. holds, let $\varphi\colon G\to GL_W$ be a faithful representation such that $\varphi_*F_G$ is essentially finite and let $\rho\colon G\to GL_V$ be an arbitrary representation. Since $\varphi_*F_G$ is essentially finite 
there is a proper surjective morphism $f\colon Y\to X$ such that $f^*\varphi_*F_G$ is trivial. Since any restriction of $f^*\varphi_* F_G$ to a connected component of $Y$ is trivial, we may assume that $Y$ is connected.  
Thus, since $f^*\varphi_*F_G\cong \varphi_*f^*F_G$, we see from Lemma \ref{triviality.via.faithful.reps} that $f^*F_G$ is trivial. 
Hence, $f^*\rho_*F_G \cong \rho_* f^*F_G$ is trivial, which implies that $\rho_* F_G$ is essentially finite (again by \cite{bds}). This proves that 2. implies 3.

Now assume that 3. holds. Then the functor $F_G\colon \rep_k(G)\to \vect_X$ factors through the category of essentially finite vector bundles, hence induces a group morphism $\rho\colon \pi_1^N(X,x)\to G$ such that $\rho_*\widetilde{X}\cong F_G$. Thus, by Proposition \ref{nori.ef.equiv} $F_G$ is essentially finite.
\end{proof}
%
%
%
\begin{proposition}
Every essentially finite $G$-torsor is semistable. 
\end{proposition}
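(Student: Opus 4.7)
The plan is to leverage Theorem \ref{ef.via.reps}: for any essentially finite $G$-bundle $F_G$ and any $G$-representation $V$, the associated vector bundle $V_{F_G}$ is essentially finite, and hence semistable of slope $0$ by Nori's classical result that essentially finite vector bundles are semistable of degree $0$. The goal is then to show, via Schieder's slope formulas from Proposition \ref{schieder.prop.slope}, that semistability of all associated bundles forces the slope inequality of Definition \ref{def.ss}.

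Concretely, I would fix a parabolic $P \supset B$ with Levi quotient $L$, a reduction $F_P$ of $F_G$ to $P$ of degree $\check{\lambda}_P$, and a dominant weight $\lambda \in X^{*}(T)$; then let $V$ be a $G$-representation of highest weight $\lambda$. The key structural step is to show that $V[\lambda + \mz\Phi_L]$ is a $P$-submodule of $V$. The Lie algebra $\mathfrak{u}_P$ of the unipotent radical of $P$ is spanned by root spaces $\mathfrak{g}_\alpha$ for positive roots $\alpha \notin \Phi_L$; such $\alpha$ have strictly positive restriction to $Z(L)$. Since $\lambda$ is a maximal weight of $V$, the subspace $V[\lambda + \mz\Phi_L]$ is precisely the sum of those weight spaces whose $Z(L)$-character is maximal, so $\mathfrak{u}_P$ must annihilate it. Together with the obvious $L$-invariance (as a union of $T$-weight spaces whose weights form a single $\mz\Phi_L$-coset), this shows $V[\lambda + \mz\Phi_L]$ is a $P$-submodule of $V$, and so taking associated bundles via $F_P$ produces an inclusion of vector bundles $V[\lambda + \mz\Phi_L]_{F_L} \hookrightarrow V_{F_G}$ on $X$.

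From this inclusion and the semistability (of slope $0$) of $V_{F_G}$, we immediately obtain $\mu(V[\lambda + \mz\Phi_L]_{F_L}) \leq \mu(V_{F_G}) = 0$, which by Proposition \ref{schieder.prop.slope} translates into the inequality $\langle \phi_P(\check{\lambda}_P), \lambda\rangle \leq \langle \phi_G(\check{\lambda}_G), \lambda\rangle$. Letting $\lambda$ range over all dominant weights yields $\phi_P(\check{\lambda}_P) \leq \phi_G(\check{\lambda}_G)$ in $X_{*}(T)_{\mq}$, which is exactly the semistability condition. The main delicate point is the verification that $V[\lambda + \mz\Phi_L]$ is genuinely a $P$-submodule of $V$, rather than merely a subquotient, so that the slope inequality goes in the direction required by Definition \ref{def.ss}; the $Z(L)$-character argument above is precisely what resolves this.
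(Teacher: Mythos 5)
Your argument is correct and follows essentially the same route as the paper: push forward to the highest-weight representation $V$, use that $V_{F_G}$ is essentially finite hence semistable, apply Proposition \ref{schieder.prop.slope} to translate the slope inequality into $\langle \phi_P(\check{\lambda}_P),\lambda\rangle \leq \langle \phi_G(\check{\lambda}_G),\lambda\rangle$, and vary $\lambda$ over the dominant cone. The only difference is that you explicitly verify that $V[\lambda+\mz\Phi_L]$ is a $P$-submodule (via the $Z(L)$-character being maximal there, so that $\mathfrak{u}_P$ annihilates it), a point the paper leaves implicit when it compares $\mu(V[\lambda+\mz\Phi_L]_{F_L})$ with $\mu(V_{F_G})$; this is a worthwhile clarification but not a different proof.
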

\begin{proof}
Let $F_G$ be such a torsor. Let further $P\subset G$ be a parabolic of $G$, let $\lambda$ be a dominant character and let $V$ be a representation of highest weight $\lambda$. Since $F_G$ is essentially finite, the associated vector bundle $V_{F_G}$ is essentially finite, hence  semistable. Hence, using Proposition \ref{schieder.prop.slope}, we have that
\begin{equation}
    \langle \psi_G(\check{\lambda}_G), \lambda \rangle = \mu(V_{F_G}) \geq \mu(V[\lambda + \mathbb{Z}\Phi_L]_{F_L}) = \langle \psi_P(\check{\lambda}_P) , \lambda \rangle.
\end{equation}
That is, for every dominant character $\lambda \in X^{*}(T)_{\mathbb{Q}}$ we have that
\begin{equation}\label{alpha.0}
    \langle \psi_G(\check{\lambda}_G)-\psi_P(\check{\lambda}_P)) , \lambda \rangle \geq 0.
\end{equation}
Since the cone of cocharacters with non-negative pairing with all dominant characters is double-dual to the cone of simple coroots, we see that 
\begin{equation}
    \psi_G(\check{\lambda}_G)-\psi_P(\check{\lambda}_P))\geq 0.
\end{equation}
\end{proof}
\begin{theorem}
Let $F_G$ be an essentially finite $G$-torsor. Then its degree is torsion as an element of $\pi_1(G)$.
\end{theorem}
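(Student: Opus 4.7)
The plan is to reduce the torsion claim to showing that the slope map $\phi_G$ annihilates the degree of $F_G$, and then to derive this vanishing from the slope formula of Proposition \ref{schieder.prop.slope} combined with Theorem \ref{ef.via.reps}. The key observation is that $\phi_G\colon \pi_1(G)\to X_*(T)_{\mathbb{Q}}$ has kernel equal to the torsion subgroup of $\pi_1(G)$: this follows by unwinding its definition
\begin{equation*}
    \pi_1(G) \to \pi_1(G)_{\mathbb{Q}} \xrightarrow{\cong} X_*(Z(G))_{\mathbb{Q}} \hookrightarrow X_*(T)_{\mathbb{Q}},
\end{equation*}
where the first map kills exactly the torsion, the middle isomorphism was recorded in Section \ref{section.background}, and the last injection comes from $Z(G)\hookrightarrow T$. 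Hence it suffices to prove that $\phi_G(\check{\lambda}_G) = 0$ in $X_*(T)_{\mathbb{Q}}$, where $\check{\lambda}_G = \deg F_G$.

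For this I would fix a dominant character $\lambda\in X^*(T)$ and a representation $V$ of highest weight $\lambda$, and apply Theorem \ref{ef.via.reps} to conclude that $V_{F_G}$ is essentially finite. Essentially finite vector bundles have degree $0$---this is classical, and can also be recovered from Proposition \ref{nori.ef.equiv}: since $V_{F_G}$ factors through a finite quotient of $\pi_{1}^{N}(X,x)$, the character corresponding to $\det V_{F_G}$ has image in some $\mu_n$, making $\det V_{F_G}$ a torsion line bundle. Consequently $\mu(V_{F_G}) = 0$, and Proposition \ref{schieder.prop.slope} then yields
\begin{equation*}
    \langle \phi_G(\check{\lambda}_G),\, \lambda \rangle \;=\; \mu(V_{F_G}) \;=\; 0.
\end{equation*}
Since the dominant cone is full-dimensional in $X^*(T)_{\mathbb{Q}}$, dominant characters $\mathbb{Q}$-span $X^*(T)_{\mathbb{Q}}$, so the linear functional $\langle \phi_G(\check{\lambda}_G),\,-\,\rangle$ vanishes identically, forcing $\phi_G(\check{\lambda}_G)=0$ and thereby completing the proof.

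The argument is essentially formal once Theorem \ref{ef.via.reps} and Proposition \ref{schieder.prop.slope} are in hand, so there is no serious obstacle; the only points that require a moment of care are the identification of $\ker \phi_G$ with the torsion of $\pi_1(G)$ and the input that essentially finite vector bundles have degree $0$. Both are routine, which keeps the argument short.
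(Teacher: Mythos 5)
Your proof is correct, but it takes a genuinely different route from the paper. The paper argues geometrically: it takes a finite torsor $j\colon F_\Gamma\to X$ trivialising $F_G$, chooses a reduction $F_B$ to a Borel, and exploits the fact that pulling back a $T$-bundle along $j$ multiplies its degree in $X_*(T)$ by $\deg(j)$; since $j^*F_G$ is trivial, the degree of $j^*F_B$ lies in $\mathbb{Z}\Phi^\vee$, and the multiplication relation forces $\deg(j)\cdot\deg(F_G)=0$ in $\pi_1(G)=X_*(T)/\mathbb{Z}\Phi^\vee$. You instead reduce the torsion claim to the vanishing of $\phi_G(\check{\lambda}_G)$ — correctly identifying $\ker\phi_G$ with the torsion of $\pi_1(G)$, since $X_*(Z(G))_{\mathbb{Q}}\hookrightarrow X_*(T)_{\mathbb{Q}}$ is injective — and then obtain this vanishing from Proposition \ref{schieder.prop.slope}, Theorem \ref{ef.via.reps}, the classical fact that essentially finite vector bundles have degree $0$, and the observation that dominant characters $\mathbb{Q}$-span $X^*(T)_{\mathbb{Q}}$ (one should note, as you implicitly do, that every dominant $\lambda$ is the highest weight of some representation, e.g.\ $H^0(\lambda)$, in any characteristic). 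Each approach has its merits: yours avoids invoking the existence of a Borel reduction and is essentially formal given the slope machinery already set up for the semistability proposition; the paper's is more self-contained at the level of degree bookkeeping and yields the sharper quantitative statement that the order of $\deg(F_G)$ in $\pi_1(G)$ divides $\deg(j)$, i.e.\ the degree of the trivialising finite cover, which your argument does not recover.
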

\begin{proof}
Let $F_G$ be such a bundle. Let $j : F_\Gamma \to X$ be a finite bundle such that $F_G \cong F_\Gamma \times^{\Gamma} G$. Let $T$ be a maximal torus and $B\supset T$ a Borel containing $T$, and choose a reduction $F_B$ of $F_G$ to a Borel. 
We know that $j^{*}F_G$ is trivial. Since 
\begin{equation}
    j^{*}F_B \times^{B}G=j^{*}(F_B \times^{B} G) = j^{*}F_G,
\end{equation} we see that $j^{*}F_B \times^{B} G$  is trivial. We have that $\pi_{0}(\calm_{B,F_{\Gamma}})=\pi_0(\calm_{T,F_\Gamma})=X_{*}(T)$ and this maps surjectively onto $\pi_0(\calm_{G,F_\Gamma})$. The fact that $j^{*}F_B$ maps to the trivial torsor means that it corresponds to $0$ in $\pi_1(G)=X_{*}(T)/\Phi^{\vee} = \pi_0(\calm_{G,F_\Gamma})$. This implies that the degree of $j^{*}F_B$, seen as an element in $X_{*}(T)$, 
is a sum of coroots. The equality $\pi_0(\calm_{B})=\pi_0(\calm_T)$ is induced by the morphism $\pi_T : B\to T$, so $\pi_{T,*}j^{*}F_B$ also corresponds to a sum of coroots. Since $\pi_{T,*}j^{*}F_B = j^{*}\pi_{T,*}F_B$, the conclusion follows if we can show that the morphism
\begin{equation}
    j^{*} : \calm_{T,X} \to \calm_{T,F_\Gamma}
\end{equation}
has the property that, if $j^{*}F_T$ has degree in $\Phi^{\vee}$, then the same holds for a multiple of $\deg(F_T)$.

If $F_T$ corresponds to the cocharacter $\mu_{F_T}$, then $j^{*}F_T$ corresponds to the cocharacter $\mu_{j^{*}F_T}=\deg(j)\mu_{F_T}$. Thus if $\mu_{F_T} = \sum_{i=1}^{n} a_i \alpha_{i}^{\vee} + \mu$, where $\alpha_i$ are the simple roots and $\mu \in X_{*}\setminus \Phi^{\vee}$ then $$\mu_{j^{*}F_T} = \sum_{i=1}^{n}\deg(j)a_i \alpha_{i}^{\vee} + \deg(j)\mu = \sum_{i=1}^{n} a_i' \alpha_{i}^{\vee}$$ Hence, $\deg(j)\mu \in \Phi^{\vee}$.  

We now apply this to our situation above, i.e., with $F_T \coloneqq \pi_{T,*}F_B$, and since $\pi_1(G)=X_{*}(T)/\Phi^{\vee}$ we can conclude that $\deg(F_G)$ is torsion.
\end{proof}
\begin{proposition}
Let $G$ be a connected, reductive group. 
If $X$ is an elliptic curve, then every essentially finite $G$-bundle over $X$ has degree 0.
\end{proposition}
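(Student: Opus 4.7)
The plan is to exploit the special feature of an elliptic curve $X$: being an abelian variety, its Nori fundamental group $\pi_{1}^{N}(X,x)$ is commutative. By Proposition \ref{nori.ef.equiv}, we have $F_G \cong \rho_{*}\widetilde{X}$ for some $\rho\colon \pi_{1}^{N}(X,x)\to G$ factoring through a finite quotient. I would use the commutativity of $\pi_{1}^{N}(X,x)$ to conclude that the image of $\rho$ is a \emph{finite commutative} subgroup $\iota\colon \Gamma\hookrightarrow G$, so that $F_G = \iota_{*}F_\Gamma$ with $\Gamma$ finite commutative.

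The next step is to reduce to a maximal torus. Since $\Gamma$ is finite and commutative (hence diagonalisable, consisting of semisimple elements), it is contained, after conjugation, in the normaliser $N_G(T)$ of some maximal torus $T\subset G$. This gives a reduction of $F_G$ to $N_G(T)$. Pushing forward along $N_G(T)\to W = N_G(T)/T$ produces a finite étale Weyl torsor $F_W\to X$; by Riemann--Hurwitz, any connected component $\pi\colon Y\to X$ of $F_W$ is itself an elliptic curve. On $Y$, the pullback $\pi^{*}F_G$ admits a further reduction to a $T$-torsor $F_T$. Since $\pi^{*}F_G$ is essentially finite and $\pi_1(T) = X_{*}(T)$ is torsion-free, the previous theorem applied to $\pi^{*}F_G$ on $Y$ forces $\deg F_T = 0$, and hence $\deg \pi^{*}F_G = 0$ in $\pi_1(G)$. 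Equivalently, $[Y:X]\cdot \deg F_G = 0$.

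Combining this annihilation with the torsion order $m$ of $\deg F_G$ (from the previous theorem), $\deg F_G$ has order dividing $\gcd([Y\!:\!X], m)$. To force $\deg F_G = 0$ I would then produce a further annihilator coprime to $m$, by varying the choice of reduction $F_\Gamma$ of $F_G$: for example by enlarging $\Gamma$ inside $G$, or by pulling back along multiplication-by-$n$ isogenies $[n]\colon X\to X$ of the elliptic curve and repeating the construction, thereby generating covers of varying coprime degrees whose existence exploits the abelian variety structure of $X$.

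The hardest part will be this final step: coordinating the choices of $\Gamma$, $T$, and the cover $Y$ so as to produce enough pairwise coprime annihilators of $\deg F_G$ requires a delicate analysis specific to elliptic curves, notably the facts that $\Pic^0(X)$ is divisible and that the multiplication maps furnish a rich supply of isogenies. A secondary subtlety is the classical but nontrivial claim that a finite commutative subgroup of a connected reductive group is always contained in the normaliser of some maximal torus, which must be handled with care in positive characteristic.
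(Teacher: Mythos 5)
Your argument does not close. The substantive output of your construction is the single relation $[Y\colon X]\cdot \deg F_G=0$ in $\pi_1(G)$, where $[Y\colon X]$ divides the order of the image of $\Gamma$ in the Weyl group and is therefore not under your control; if $\deg F_G$ has order $2$, say, and the relevant Weyl cover has even degree, you learn nothing. The final step, which you yourself flag as the hard one, is where the proof actually lives, and the two mechanisms you propose for producing a coprime annihilator do not work: enlarging $\Gamma$ only enlarges the possible degrees $[Y\colon X]$ (it does not let you shrink them to something coprime to $m$), and pulling back along an isogeny $[n]\colon X\to X$ is circular --- $[n]^{*}F_G$ is again an essentially finite $G$-bundle on an elliptic curve, so knowing that \emph{its} degree vanishes is precisely an instance of the statement you are trying to prove, and without that you only get the tautology $\deg([n]^{*}F_G)=n^2\deg F_G$. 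So there is a genuine gap, not merely a technical loose end.

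There is a second problem in positive characteristic. The Nori fundamental group scheme of an elliptic curve in characteristic $p$ is $\varprojlim_n X[n]$, which has infinitesimal and unipotent quotients (e.g.\ $\mu_p\times\mathbb{Z}/p$ for ordinary curves, a non-split self-extension of $\alpha_p$ for supersingular ones); hence $\Gamma$ is a finite commutative group \emph{scheme} that is in general not diagonalisable, and "finite commutative $\Rightarrow$ diagonalisable $\Rightarrow$ contained in $N_G(T)$" fails at the first arrow. Even in characteristic $0$ the containment $\Gamma\subset N_G(T)$ is not a consequence of diagonalisability of $\Gamma$; it is the Borel--Serre theorem that finite supersolvable subgroups of a connected reductive group normalise a maximal torus, which should be cited. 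For comparison, the paper avoids all of this by inducting on $\dim G$: Fr\u{a}\c{t}il\u{a}'s structure theorem for $\calm_{G,X}^{d}$ over an elliptic curve provides a reduction of $F_G$ to a \emph{proper} Levi $L$ of degree $d'$ mapping to $d$, Theorem \ref{ef.via.reps} shows the Levi reduction is again essentially finite, and the induction bottoms out at $\mathbb{G}_m$, where torsion degree forces degree $0$ because $X_{*}(\mathbb{G}_m)$ is torsion-free. If you want to salvage your approach, the most promising route is to replace the normaliser-of-a-torus step by such a Levi reduction, since that is what supplies a torsion-free receptacle for the degree without any coprimality bookkeeping.
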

\begin{proof}
We argue by induction on the dimension of $G$. If $\dim(G)=1$ then $G\cong \mg_m$ and the result follows since it is true for all vector bundles. Suppose now that $\dim(G)=n> 1$. Let $F_G$ be an essentially finite $G$-bundle of degree $d$. By \cite{dragos} there is a proper Levi $L$ and a degree $d'\in \pi_1(L)$ such that the inclusion $\iota\colon L\to G$ induces a surjection $\calm_{L,X}^{d'}\to \calm_{G,X}^{d}$. Let $F_L$ be a reduction of structure group of $F_G$ to $L$. Since $F_G$ is essentially finite there is a faithful representation $\rho\colon G\to \text{GL}_V$ such that $\rho_{*}F_G \cong (\rho\circ \iota)_{*}F_L$ is essentially finite. By Theorem \ref{ef.via.reps} this implies that $F_L$ is essentially finite. Since $L$ is a proper Levi, by induction $d'=0$, whence $d=0$.
\end{proof}
%
%
%
%
If the characteristic of $k$ is positive, there is a stronger notion of semistability, defined as follows. Let $\sigma_X : X \to X$ denote the absolute Frobenius of $X$.
\begin{definition}
A $G$-torsor $F_G$ is said to be {\bf strongly semistable} if for all $n>0$, $(\sigma_{X}^{n})^{*}F_G$ is semistable.
\end{definition}
\begin{proposition}\label{ef.strongly.semistable}
Every essentially finite $G$-torsor is strongly semistable.
\end{proposition}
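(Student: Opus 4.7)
The plan is to reduce strong semistability to (plain) semistability of all Frobenius pullbacks, and then show that the class of essentially finite $G$-torsors is closed under the absolute Frobenius pullback. Once both $(\sigma_X^n)^* F_G$ is shown to be essentially finite, the earlier proposition that essentially finite $G$-torsors are semistable finishes the proof.

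The key input is the equivalence ``1. $\iff$ 4.'' from Theorem \ref{ef.via.reps}: there exists a proper surjective morphism $f\colon Y \to X$ such that $f^{*}F_G$ is trivial. I would like to use the same $f$ as a trivialising cover for each $(\sigma_X^n)^{*}F_G$. This works because the absolute Frobenius is a natural transformation of the identity functor, i.e., for any morphism $f\colon Y \to X$ of $\mathbb{F}_p$-schemes one has the identity $\sigma_X \circ f = f \circ \sigma_Y$, and hence $\sigma_X^n \circ f = f \circ \sigma_Y^n$ for all $n \geq 1$. Therefore
\begin{equation}
    f^{*}(\sigma_X^n)^{*}F_G \;\cong\; (\sigma_X^n \circ f)^{*}F_G \;\cong\; (f \circ \sigma_Y^n)^{*}F_G \;\cong\; (\sigma_Y^n)^{*}(f^{*}F_G),
\end{equation}
and since $f^{*}F_G$ is the trivial $G$-bundle on $Y$, its pullback along $\sigma_Y^n$ is still trivial.

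Thus $f\colon Y \to X$ is a proper surjective morphism trivialising $(\sigma_X^n)^{*}F_G$, so by the implication ``4. $\Rightarrow$ 1.'' of Theorem \ref{ef.via.reps}, $(\sigma_X^n)^{*}F_G$ is essentially finite. Applying the earlier proposition (every essentially finite $G$-torsor is semistable) to each $(\sigma_X^n)^{*}F_G$ gives that $F_G$ is strongly semistable.

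The main subtlety to double-check is purely formal: namely that $f$ remains proper and surjective when viewed as a trivialising cover for $(\sigma_X^n)^{*}F_G$ — which it does, since we literally re-use the same morphism $f$ of schemes. The only nontrivial ingredient is the commutation $\sigma_X^n \circ f = f \circ \sigma_Y^n$ of absolute Frobenii, which is automatic since absolute Frobenius acts as the identity on the underlying topological space and raises sections of the structure sheaf to the $p^n$-th power, making it natural in all morphisms of $\mathbb{F}_p$-schemes. I do not foresee any real obstacle beyond recording this commutativity carefully.
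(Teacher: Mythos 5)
Your proof is correct. It follows the same overall strategy as the paper — show that $(\sigma_X^n)^*F_G$ is again essentially finite and then invoke the proposition that essentially finite torsors are semistable — but the intermediate step is handled by a different mechanism. The paper works directly with the reduction to a finite group $\Gamma$: writing $F_G\cong \iota_*F_\Gamma$, it uses the identity $(\sigma_\Gamma)_*F_\Gamma\cong \sigma_X^*F_\Gamma$ together with the commutation of $\iota_*$ with pullback to exhibit $\sigma_X^*F_G$ as $\iota_*(\sigma_{\Gamma,*}F_\Gamma)$, i.e., as a pushforward of a $\Gamma$-torsor, and then inducts on $n$. You instead route through the equivalence $1.\iff 4.$ of Theorem \ref{ef.via.reps} (a trivialising proper surjective cover $f\colon Y\to X$) and the naturality $\sigma_X^n\circ f=f\circ\sigma_Y^n$ of the absolute Frobenius, concluding that the same $f$ trivialises every $(\sigma_X^n)^*F_G$. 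Both are valid; the paper's argument is more self-contained, using only the definition of essential finiteness and the Frobenius of the finite group scheme, whereas yours leans on the characterisation via trivialising covers, which in turn rests on the cited result of \cite{bds}. In exchange, your version dispenses with the induction and handles all $n$ at once, and it makes transparent that one single cover $f$ works uniformly for every Frobenius pullback.
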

\begin{proof}
For any algebraic group $H$, and any $H$-torsor, if $\sigma_H : H \to H$ denotes the absolute Frobenius of $H$, then we have that
\begin{equation}\label{abs.frob.push.pull}
    (\sigma_H)_{*}F_H \cong \sigma_{X}^{*}F_H.
\end{equation}
Let now $F_G$ be an essentially finite $G$-torsor. Let $j : F_\Gamma \to X$ be a finite bundle such that $F_G \cong F_\Gamma \times^{\Gamma} G$. Then by (\ref{abs.frob.push.pull}) applied to $\Gamma$ and since the push-forward along group morphisms commutes with pullbacks, we have that
\begin{equation}
    \iota_{*}(\sigma_{\Gamma,*}) F_{\Gamma} \cong \iota_{*} \sigma_{X}^{*}F_{\Gamma}
    \cong (\sigma_X)^{*}\iota_{*}F_{\Gamma} 
    \cong (\sigma_X)^{*}F_G.
\end{equation}
Hence $(\sigma_X)^{*}F_G$ is essentially finite and thus semistable. The statement follows similarly via induction.
\end{proof}
\subsection{The prestack of essentially finite torsors}
Let $\calm_{G}^{\ef}$ denote the functor
\begin{equation}
    \begin{aligned}
    \calm_{G}^{\ef} : \aff_{k}^{\op} &\to \grpds \\
    U &\mapsto \Big\{ \text{essentially finite $G$-torsors over $U\times X$} \Big \} + \Big\{ \text{isomorphism of $G$-torsors} \Big\}.
    \end{aligned}
\end{equation}
It is immediate that $\calm_{G}^{\ef}$ is a subfunctor of $\calm_G^{\Ss}$. 
\begin{proposition}
The functor $\calm_{G}^{\ef}$ is a $k$-prestack.
\end{proposition}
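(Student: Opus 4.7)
The plan is to exploit that $\calm_G$ is already a stack (see \cite{ramana1}, \cite{ramana2}) and observe that $\calm_G^{\ef}$ is a full subfunctor of it. Recall that to be a $k$-prestack means: (i) $\calm_G^{\ef}$ is a well-defined (pseudo)functor $\aff_k^{\op}\to \grpds$, i.e.\ the assignment $U\mapsto \calm_G^{\ef}(U)$ is closed under pullback, and (ii) for any two objects $F,F'\in \calm_G^{\ef}(U)$, the presheaf $V\mapsto \Isom(f^{*}F,f^{*}F')$ on $U$-schemes is an fppf sheaf. I will address (i) and (ii) in turn.

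For (i), I need to show that essential finiteness is preserved under pullback. Let $f\colon V\to U$ be a morphism of affine $k$-schemes, write $\tilde f:=f\times \id_X$, and suppose $F_G$ on $U\times X$ is essentially finite: there exist a finite group scheme $\Gamma$, a homomorphism $\Gamma\to G$, and a $\Gamma$-torsor $F_\Gamma$ on $U\times X$ with $F_G\cong F_\Gamma\times^{\Gamma}G$. Since pullback commutes with contraction along a group homomorphism,
\begin{equation*}
\tilde f^{*}F_G \;\cong\; \tilde f^{*}(F_\Gamma\times^{\Gamma}G) \;\cong\; (\tilde f^{*}F_\Gamma)\times^{\Gamma}G,
\end{equation*}
and $\tilde f^{*}F_\Gamma$ remains a $\Gamma$-torsor on $V\times X$. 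Hence $\tilde f^{*}F_G$ is essentially finite, confirming that $\calm_G^{\ef}$ is a subfunctor of $\calm_G$.

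For (ii), the decisive observation is that morphisms in $\calm_G^{\ef}(U)$ are by definition just isomorphisms of the underlying $G$-torsors; the data of a reduction to a finite group is carried only at the level of objects. Consequently, for $F,F'\in \calm_G^{\ef}(U)$ the $\Isom$-presheaf on $U$-schemes computed in $\calm_G^{\ef}$ coincides identically with the one computed in $\calm_G$. Since the latter is an fppf sheaf (because $\calm_G$ is a stack), so is the former. This establishes the prestack condition.

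There is no real obstacle here, which is why the statement is phrased as a proposition rather than a theorem: it is the formal fact that a full subfunctor of a stack which is closed under pullback is automatically a prestack. The only point that deserves any checking is the compatibility of pullback with the operation $(-)\times^{\Gamma}G$, which is immediate from the construction of associated bundles.
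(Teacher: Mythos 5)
Your proof is correct and follows essentially the same route as the paper: check that essential finiteness is stable under pullback, then observe that the $\text{Isom}$-presheaves agree with those of $\calm_G$ (where they are sheaves) because morphisms are just morphisms of the underlying $G$-torsors. The only cosmetic difference is that you verify pullback-stability via the compatibility of $\tilde f^{*}$ with $(-)\times^{\Gamma}G$, whereas the paper does it by pulling back a $\Gamma$-valued cocycle; both are fine.
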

\begin{proof}
First suppose that $f : U'\to U$ is a morphism in $\aff_{k}^{\op}$ and suppose $F_G$ is an essentially finite $G$-torsor over $U\times X$. Let $(U_i \to U)$ be a cover and $(g_{ij} : g_{ij} \in G(U_{ij}))$ a cocycle for $F_G$. Then $(f^{*}U_i \to U')$ is a cover of $U'$ and $(f^{*}g_{ij})_{ij}$ is a cocycle for $f^{*}F_G$. Indeed, since $g_{ij}g_{jk}=g_{ik}$ we see that
\begin{equation}
    f^{*}g_{ij} f^{*}g_{jk} (x) = g_{ij}(f(x))g_{jk}(f(x))=g_{ik}(f(x))=f^{*}g_{ik}(x).
\end{equation}
The torsor $f^{*}F_G$ is also essentially finite since if $g_{ij} \in \Gamma(U_{ij})\subset G(U_{ij})$ for some finite group $\Gamma$, then $f^{*}g_{ij} = g_{ij}\circ f$ also takes values in $\Gamma$. Since $\calm_{G}^{\Ss}$ is a lax functor we see that $\calm_{G}^{\ef}$ is one as well.

Next it is clear that if $F_G,F_G' \in \calm_{G}^{\ef}(U)$, then $\underline{\text{Isom}}(F_G,F_G') : \aff_{/U} \to \set$ is a sheaf since homomorphisms of finite $G$-torsors are simply homomorphisms of $G$-torsors and $\calm_{G}^{\Ss}$ is a stack.

\end{proof}
\begin{remark}
Note however that $\calm_{G}^{\ef}$ is not a stack since the descent data is not necessarily effective. Indeed, let $G=\text{GL}_n$ and let $E$ be a vector bundle which is not essentially finite. Let further $(U_i \to X)$ be a trivialising cover of $E$, with trivilising morphisms $\phi_i : E|_{U_i} \rightarrow \calo_{U_i}^{n}$. Then $E|_{X\times U_i}$ with the morphisms $(\id \times \phi_{j}^{-1}) \circ (\id \times \phi_i)$ form a descent data for $E|_{X\times X}\in \calm_{G}(X)$. Now, if $E|_{X\times X}$ is essentially finite, then so is $E$. Indeed, by \cite{bds} we have a proper surjective morphism $f : Y \to X\times X$ such that $f^{*}E_{X\times X}$ is trivial, and by composing with the projection $X\times X \to X$ we have a proper surjective morphism $g : Y\to X$ such that $g^{*}E$ is trivial. Since $E$ was assumed not to be essentially finite, we conclude that $E|_{X\times X}$ is not essentially finite and the descent data constructed is not effective.
\end{remark}
The following statement is immediate, but will be important for us in the final section.
\begin{proposition}\label{productfinite}
Let $G$ and $G'$ be reductive groups. The isomorphism $\calm_{G\times G'}^{\Ss}\xrightarrow{\cong} \calm_{G}^{\Ss} \times \calm_{G'}^{\Ss}$ restricts to an isomorphism
\begin{equation}
    \calm_{G\times G'}^{\ef} \cong \calm_{G}^{\ef}\times \calm_{G'}^{\ef}.
\end{equation}
\end{proposition}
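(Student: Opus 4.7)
The plan is to reduce the statement to the characterisation of essentially finite bundles via trivialising covers, namely item 4 of Theorem \ref{ef.via.reps}. Write $\pi\colon G\times G'\to G$ and $\pi'\colon G\times G'\to G'$ for the two projections, so that the known isomorphism $\calm_{G\times G'}^{\Ss}\cong \calm_G^{\Ss}\times \calm_{G'}^{\Ss}$ sends a torsor $F_{G\times G'}$ to the pair $(\pi_{*}F_{G\times G'},\pi'_{*}F_{G\times G'})$. The task is to check that this correspondence restricts to essentially finite torsors in both directions.

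For the forward direction, assume $F_{G\times G'}$ is essentially finite. By Theorem \ref{ef.via.reps} there is a proper surjective morphism $f\colon Y\to X$ such that $f^{*}F_{G\times G'}$ is trivial. Since extension of structure group along a group morphism commutes with pullback,
\begin{equation}
    f^{*}\pi_{*}F_{G\times G'}\cong \pi_{*}f^{*}F_{G\times G'}
\end{equation}
is trivial, and likewise for $\pi'$. Again by Theorem \ref{ef.via.reps}, both $\pi_{*}F_{G\times G'}$ and $\pi'_{*}F_{G\times G'}$ are essentially finite.

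For the reverse direction, suppose $F_G\coloneqq \pi_{*}F_{G\times G'}$ and $F_{G'}\coloneqq \pi'_{*}F_{G\times G'}$ are essentially finite. Choose proper surjective morphisms $f_1\colon Y_1\to X$ and $f_2\colon Y_2\to X$ trivialising $F_G$ and $F_{G'}$ respectively, and form the fibre product $Y\coloneqq Y_1\times_X Y_2$ with structure map $f\colon Y\to X$. Then $f$ is proper (as a composition of base changes of the $f_i$) and surjective (using that each $f_i$ is), and the pullbacks $f^{*}F_G$ and $f^{*}F_{G'}$ are trivial, since each factors through $f_i^{*}F_G$ or $f_i^{*}F_{G'}$. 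Applying the isomorphism $\calm_{G\times G'}(Y)\cong \calm_G(Y)\times \calm_{G'}(Y)$ to $f^{*}F_{G\times G'}$ identifies it with the pair $(f^{*}F_G,f^{*}F_{G'})$, which is a pair of trivial torsors; hence $f^{*}F_{G\times G'}$ is itself trivial, and $F_{G\times G'}$ is essentially finite by Theorem \ref{ef.via.reps}.

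There is no serious obstacle: the only point requiring a brief justification is that the base change $Y=Y_1\times_X Y_2\to X$ remains proper and surjective, and that a $(G\times G')$-torsor is trivial exactly when both of its projections are, which is simply the triviality of the basepoint in $\calm_G(Y)\times \calm_{G'}(Y)$. (Strictly speaking Theorem \ref{ef.via.reps} is stated for connected reductive groups, so one should read the statement of the proposition under the same convention, which is harmless since $G\times G'$ is connected reductive whenever $G$ and $G'$ are.)
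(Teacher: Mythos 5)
Your argument is correct, but it takes a genuinely different route from the paper's. The paper proves this directly from the definition of essential finiteness: if $\Gamma\subset G\times G'$ is a finite structure group of $F_{G\times G'}$, then its images $\pi(\Gamma)$ and $\pi'(\Gamma)$ under the two projections are finite structure groups of $\pi_{*}F_{G\times G'}$ and $\pi'_{*}F_{G\times G'}$, and conversely finite structure groups $\Gamma$, $\Gamma'$ of $F_G$, $F_{G'}$ give the finite structure group $\Gamma\times\Gamma'$ of $F_G\times F_{G'}$ --- a two-line verification requiring no hypotheses beyond the definition. You instead route everything through the trivialising-cover criterion (item 4 of Theorem \ref{ef.via.reps}), using commutation of pushforward with pullback in one direction and the fibre product $Y_1\times_X Y_2$ of two trivialising covers in the other; the auxiliary facts you flag (properness and surjectivity of the base change, triviality of a $(G\times G')$-torsor being equivalent to triviality of both projections) are all fine. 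The trade-off is that your proof leans on Theorem \ref{ef.via.reps}, which is stated only for connected reductive groups (you correctly note this caveat), whereas the paper's direct argument is connectedness-free and more elementary. Your forward direction could also be shortened: pushforward along any group homomorphism preserves essential finiteness immediately from the definition (compose the reduction $\Gamma\to G\times G'$ with the projection), as the paper already records in its list of examples.
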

\begin{proof}
The isomorphism on objects is given by 
\begin{equation}
\begin{aligned}
    F_{G\times G'} &\mapsto (\pi_{*} F_{G\times G'}, \pi_{*}' F_{G\times G'}), \\
    (F_G , F_{G'})&\mapsto F_G \times F_{G'},
\end{aligned}
\end{equation}
where $\pi : G\times G' \to G$ and $\pi' : G\times G' \to G'$ are the projections. If $\Gamma \subset G\times G'$ is a finite structure group of $F_{G\times G'}$, then $\pi(\Gamma)$ and $\pi'(\Gamma')$ are evidently finite structure groups of $F_G$ and $F_{G'}$ respectively. Similarly, finite structure groups $\Gamma$ and $\Gamma'$ of $F_G$, respectively $F_{G'}$, give a finite structure group, $\Gamma \times \Gamma'$ of $F_G \times F_{G'}$.
\end{proof}
%
%
%
%
%
%
\section{Density of essentially finite torsors}\label{section.density}
In this section we prove the density statements made in the introduction. The section is divided into subsections, depending on the genus of $X$.
\subsection{Preliminaries}
\begin{proposition}\label{dense.section}
Suppose $\pi : G\to H$ is a morphism of algebraic groups such that $\pi(Z(G)^{0})\subset Z(H)^{0}$. If $\pi$ admits a section $s : H\to G$ such that $s(Z(H)^{0})\subset Z(G)^{0}$, then density of $M_{G}^{\ef}$ in $M_{G}^{\Ss,0}$ implies density of $M_{H}^{\ef}$ in $M_{H}^{\Ss,0}$.
\end{proposition}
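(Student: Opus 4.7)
The plan is to construct pushforward morphisms $\pi_{*}\colon M_{G}^{\Ss,0}\to M_{H}^{\Ss,0}$ and $s_{*}\colon M_{H}^{\Ss,0}\to M_{G}^{\Ss,0}$ on the moduli spaces, observe that $\pi_{*}\circ s_{*}=\id$, and then use continuity of $\pi_{*}$ in the Zariski topology to transfer the density hypothesis.

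The first step is to check that $\pi_{*}$ and $s_{*}$ preserve both the semistable locus and the zero-degree component. Preservation of degree $0$ is immediate from Lemma \ref{deg0push}, since $G$ and $H$ are connected reductive. Preservation of semistability is where the two center hypotheses enter: the inclusion $\pi(Z(G)^{0})\subset Z(H)^{0}$ ensures that $\pi$ descends to a morphism of adjoint quotients $G/Z(G)^{0}\to H/Z(H)^{0}$, and since semistability of a reductive torsor can be tested on the adjoint bundle, one concludes that $\pi_{*}F_{G}$ is semistable whenever $F_{G}$ is. The symmetric argument applied to $s$ handles $s_{*}$.

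Since $\pi\circ s=\id_{H}$, functoriality of induction gives $\pi_{*}\circ s_{*}=\id$ on the level of moduli; in particular $s_{*}$ is injective, $\pi_{*}$ is surjective, and both arise from morphisms of schemes and are therefore continuous. Let now $U\subset M_{H}^{\Ss,0}$ be a nonempty open subset. Then $\pi_{*}^{-1}(U)\subset M_{G}^{\Ss,0}$ is open and contains $s_{*}(U)$, hence is nonempty. By the density hypothesis there exists an essentially finite $G$-torsor $F_{G}$ whose isomorphism class lies in $\pi_{*}^{-1}(U)$. Its pushforward $\pi_{*}F_{G}$ lies in $U$ and is essentially finite, because pushforward along any group homomorphism preserves essential finiteness (the third item in the Example following the definition of essentially finite torsors). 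Hence $M_{H}^{\ef}$ meets every nonempty open subset of $M_{H}^{\Ss,0}$, which is the desired density.

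The main obstacle is the semistability-preservation step. A general morphism of reductive groups can fail to preserve semistability, and the hypothesis on the connected centers is precisely what makes the induced maps on adjoint quotients exist; after that, the argument reduces to the standard equivalence between semistability of a reductive torsor and semistability of its adjoint bundle. Writing this out carefully, and verifying that the set-theoretic maps $\pi_{*}$ and $s_{*}$ indeed descend from the stacks to morphisms of the coarse moduli spaces $M_{G}^{\Ss,0}$ and $M_{H}^{\Ss,0}$, is the main technical content of the proof.
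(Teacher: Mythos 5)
Your proposal is correct and follows essentially the same route as the paper: both use the pair $(\pi_{*},s_{*})$ with $\pi_{*}\circ s_{*}=\id$, continuity of $\pi_{*}$, and the fact that pushforward preserves essential finiteness and degree $0$; your open-set formulation of density is just the unfolded version of the paper's closure/contrapositive argument. Your extra discussion of why the center hypotheses guarantee preservation of semistability is a welcome elaboration of a point the paper relegates to a one-line remark.
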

\begin{proof}
We prove the contrapositive. Thus, suppose that $M_{H}^{\ef}$ is not dense in $M_{H}^{\Ss,0}$.
Since $\pi_{*}$ takes essentially finite $G$-torsors to essentially finite $H$-torsors, by (\ref{deg0push}) we have a commutative diagram as follows
\begin{equation}
    \begin{tikzcd}
    M_{G}^{\ef} \arrow[r, bend left = 10, "\pi_{*}"] \arrow[d, hookrightarrow] & M_{H}^{\ef} \arrow[l, bend left = 10, "s_{*}"]  \arrow[d, hookrightarrow] \\
    M_{G}^{\Ss,0} \arrow[r, bend left = 10, "\pi_{*}"] & M_{H}^{\Ss,0} \arrow[l, bend left = 10, "s_{*}"]
    \end{tikzcd}
\end{equation}
Since  $\pi_{*}$ is continuous, $\pi_{*}\Big(\overline{M_{G}^{ef}}\Big)\subset \overline{M_{H}^{ef}}$. Suppose now on the contrary that $M_{G}^{\ef}$ is dense in $M_{G}^{\Ss,0}$. Pick any $F\in M_{H}^{\Ss,0}$. Then $s_{*}F\in M_{G}^{\Ss,0}= \overline{M_{G}^{\ef}}$. But since $\pi_{*}s_{*}=\id$ we see that
\begin{equation}
    F = \pi_{*}s_{*}F \in \pi_{*}\Big(\overline{M_{G}^{\ef}}\Big)\subset \overline{M_{H}^{\ef}},
\end{equation}
which implies that $\overline{M_{G}^{\ef}}=M_{H}^{\Ss,0}$. Contradiction.
\end{proof}
\begin{remark}
The condition on the centers is to make sure that the pushforward of a semistable bundle is semistable.
\end{remark}
\begin{corollary}\label{direct.products}
Let $G$ be a direct product of reductive groups $G_1$ and $G_2$. If $M_{G_i}^{\ef}$ is not dense in $M_{G_i}^{\Ss,0}$ for some $i=1,2$, then $M_{G}^{\ef}$ is not dense in $M_{G}^{\Ss,0}$.
\end{corollary}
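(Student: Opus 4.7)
The plan is a direct application of Proposition \ref{dense.section}. Without loss of generality, suppose $M_{G_1}^{\ef}$ is not dense in $M_{G_1}^{\Ss,0}$. I take $\pi\colon G_1\times G_2 \to G_1$ to be the canonical projection, and $s\colon G_1\to G_1\times G_2$, $g\mapsto (g, e_{G_2})$, to be the natural inclusion, so that $\pi\circ s = \id_{G_1}$ exhibits $s$ as a section of $\pi$.

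To invoke Proposition \ref{dense.section} I must verify the conditions on the connected components of the centers. For reductive $G_1, G_2$ one has $Z(G_1\times G_2)^{0} = Z(G_1)^{0}\times Z(G_2)^{0}$. Thus $\pi$ sends $Z(G_1\times G_2)^{0}$ onto $Z(G_1)^{0}$, and $s$ sends $Z(G_1)^{0}$ into $Z(G_1)^{0}\times\{e_{G_2}\}\subset Z(G_1\times G_2)^{0}$, so both hypotheses hold. The contrapositive of Proposition \ref{dense.section} then immediately yields the desired non-density of $M_{G}^{\ef}$ in $M_{G}^{\Ss,0}$.

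There is no real obstacle here; the substantive content has been isolated into Proposition \ref{dense.section}. An entirely parallel alternative, not requiring the previous proposition, is to combine Proposition \ref{productfinite} with the analogous product decomposition $M_{G}^{\Ss,0}\cong M_{G_1}^{\Ss,0}\times M_{G_2}^{\Ss,0}$ and the topological identity $\overline{A_1\times A_2}=\overline{A_1}\times \overline{A_2}$ for the Zariski topology on a product of schemes of finite type, from which the claim is immediate. I would present the argument via Proposition \ref{dense.section} as it is the shortest.
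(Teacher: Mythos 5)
Your proof is correct and is essentially the paper's own argument: the paper likewise applies Proposition \ref{dense.section} to the projection $\pi_i\colon G\to G_i$ with its evident section, and you have merely spelled out the (straightforward) verification of the center conditions. No further comment is needed.
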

\begin{proof}
We use the projection $\pi_i : G \to G_i$ and apply the previous proposition.
\end{proof}
\begin{proposition}\label{dense.torus}
Let $G=T$ be a torus. Then $M_{T}^{\ef}$ is dense in $M_{T}^{\Ss,0}$. 
\end{proposition}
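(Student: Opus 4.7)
The plan is to reduce to the case $T=\mathbb{G}_m$ already recorded in the introduction. First I would note that, since $k$ is algebraically closed, $T$ is split: $T\cong \mathbb{G}_m^n$ for some $n\geq 0$. Iterating Proposition \ref{productfinite} yields compatible product decompositions
\begin{equation*}
    M_T^{\ef}\cong \big(M_{\mathbb{G}_m}^{\ef}\big)^n
    \quad\text{and}\quad
    M_T^{\Ss,0}\cong \big(M_{\mathbb{G}_m}^{\Ss,0}\big)^n = \mathrm{Jac}^0(X)^n,
\end{equation*}
the right-hand space being an abelian variety (here I use that for a torus there are no proper parabolics, so semistability is automatic and the stack-level decomposition of Proposition \ref{productfinite} descends to the $k$-points of the coarse moduli).

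Next, since $\pi_1(T)=X_{*}(T)$ is torsion-free (a torus has no coroots), the torsion-degree theorem proved earlier in this section forces every essentially finite $T$-torsor to have degree $0$; in particular $M_T^{\ef}=M_T^{\ef,0}$. For the factor $T=\mathbb{G}_m$, Lemma 3.1 of \cite{Nori1} (recalled in the introduction) identifies essentially finite line bundles with torsion line bundles, so $M_{\mathbb{G}_m}^{\ef}$ is precisely the torsion subgroup of $\mathrm{Jac}^0(X)$. Under the product identification above, $M_T^{\ef}$ is therefore exactly the torsion subgroup of the abelian variety $\mathrm{Jac}^0(X)^n$.

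The conclusion then follows from the classical fact that the torsion subgroup of any abelian variety over an algebraically closed field is Zariski-dense: for each integer $N$ coprime to $\Char(k)$, multiplication by $N$ is a surjective étale isogeny of $\mathrm{Jac}^0(X)^n$, so the union of its kernels over all such $N$ (a subset of the torsion) cannot be contained in any proper closed subvariety. There is no serious obstacle; the only small compatibility to check is that the isomorphism of Proposition \ref{productfinite} respects the inclusion $M_G^{\ef}\hookrightarrow M_G^{\Ss,0}$, which is immediate from the explicit description given there.
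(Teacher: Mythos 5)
Your proof is correct and follows essentially the same route as the paper: split $T\cong\mathbb{G}_m^{n}$, reduce to the line-bundle case via Proposition \ref{productfinite}, and conclude from the density of torsion points in $\mathrm{Jac}^0(X)$. The paper's own proof is a compressed version of exactly this (it likewise treats the density of torsion in an abelian variety as classical), so the extra observations you make --- degree $0$ being automatic since $X_{*}(T)$ is torsion-free, and the compatibility of the product decomposition with the coarse moduli --- are welcome but not a departure.
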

\begin{proof}
First suppose $T=\mg_m$. Then $M_{T}^{\Ss,0}=\text{Jac}^0(X)$ is the Jacobian of $X$ and essentially finite $\mg_m$-torsors corresponds to finite line bundles which corresponds to torsion points on $\text{Jac}^0(X)$, which are dense. If $T\cong \mg_m^r$ for $r>1$, then we apply Proposition \ref{productfinite}  and the statement follows. 
\end{proof}
\subsection{Genus 0}
Let now $X=\mathbb{P}_{k}^1$, where $k$ is an arbitrary algebraically closed field. 
 By Proposition \ref{nori.ef.equiv} we immediately have the following statement.
\begin{proposition}\label{ef.over.p1}
Every essentially finite $G$-bundle over $X$ is trivial.
\end{proposition}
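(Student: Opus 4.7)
The plan is to invoke Proposition~\ref{nori.ef.equiv}, which expresses any essentially finite $G$-bundle on $X=\mathbb{P}^1_k$ as $\rho_*\widetilde{X}$ for some morphism $\rho\colon \pi_1^N(X,x)\to G$. Thus, if we can show that $\pi_1^N(\mathbb{P}^1,x)$ is trivial, then $\rho$ is forced to be the trivial morphism and $F_G \cong \rho_*\widetilde{X}$ is the trivial $G$-bundle.

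To establish triviality of $\pi_1^N(\mathbb{P}^1,x)$, I would argue that every essentially finite vector bundle on $\mathbb{P}^1$ is trivial, since the Nori fundamental group is by definition the Tannaka group of this category with respect to the fibre functor at $x$. By Grothendieck's splitting theorem every vector bundle on $\mathbb{P}^1$ decomposes as $\bigoplus_i \mathcal{O}(a_i)$, and such a bundle is semistable precisely when all the $a_i$ coincide. Now essentially finite vector bundles are semistable of torsion degree by the results proven earlier in this paper, and $\pic(\mathbb{P}^1)=\mathbb{Z}$ is torsion-free, so the common value of the $a_i$ must be $0$. Hence every essentially finite vector bundle on $\mathbb{P}^1$ is isomorphic to $\mathcal{O}^{\oplus n}$ for some $n$; the category is therefore equivalent to $\vect_k$ via the fibre functor, and its Tannaka group is trivial.

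There is no serious obstacle; the argument is a straightforward combination of Proposition~\ref{nori.ef.equiv}, Grothendieck's splitting theorem, and the already established semistability and torsion-degree properties of essentially finite bundles. As an alternative route avoiding the Nori group computation directly, one could instead invoke Theorem~\ref{ef.via.reps}: pick a faithful representation $\rho\colon G\to \text{GL}_V$, observe that $\rho_*F_G$ is an essentially finite vector bundle on $\mathbb{P}^1$ and hence trivial by the same splitting argument, and then apply Lemma~\ref{triviality.via.faithful.reps} to conclude that $F_G$ itself is trivial.
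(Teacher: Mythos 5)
Your proof is correct and follows the paper's own route: the paper likewise deduces the statement from Proposition~\ref{nori.ef.equiv} together with the triviality of $\pi_1^N(\mathbb{P}^1,x)$ (which you additionally justify via Grothendieck's splitting theorem and the semistability/degree-zero properties, whereas the paper takes it as known). Your alternative via Theorem~\ref{ef.via.reps} and Lemma~\ref{triviality.via.faithful.reps} is also valid, and is in the same spirit as the paper's second, Tannakian proof of the same proposition.
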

\begin{proof}
Since $\pi_{1}^{N}(X,x)$ is trival, the statement follows from Proposition \ref{nori.ef.equiv}.
\end{proof}
It is also well-known that $M_{G}^{\Ss,0}(k)$ is a singleton so the density statement is immediate.

For the remainder of this section, we give a different proof of Proposition \ref{ef.over.p1}, which might be interesting in its own right. We do this by using the Tannakian interpretation of essentially finite $G$-bundles and the classification of $G$-bundles on X. 

The classification of $G$-bundles on $X$ was initially done by Grothendieck \cite{Gro57} and  by Harder  \cite{hard}  for characterstic $p$. In \cite{torsorsprojective} Anschütz gives a Tannakian interpretation of this classification. We thus begin by introducing the relevant notions from \cite{torsorsprojective}.

Over $X$ there is a canonical $\mg_m$-torsor 
\begin{equation}
\begin{aligned}
    \eta : \mathbb{A}^2\setminus \{ 0 \} &\to X \\
    (x_0,x_1) &\mapsto [x_0 : x_1],
\end{aligned}
\end{equation} 
often called the Hopf bundle. Pushforward along this bundle defines an exact, faithful tensor functor
\begin{equation}
    \begin{aligned}
    \mathcal{E} : \rep_k(\mg_m) &\to \vect_X \\
    V &\mapsto \mathbb{A}^2\setminus \{ 0 \} \times^{\mg_m} V.
    \end{aligned}
\end{equation}
Taking the Harder-Narashiman filtration of a vector bundle over $X$ defines a fully faithful tensor functor
\begin{equation}
    HN : \vect_X \to \text{FilVec}_X
\end{equation}
from $\vect_X$ to the category of filtered vector bundles. Finally we can take the graded pieces of a filtered vector bundle and this defines an exact tensor functor
\begin{equation}
    \gr : \text{FilVec}_X \to \text{Grvec}_X,
\end{equation}
where $\text{GrVec}_X$ is the category of graded vector bundles. 
\begin{proposition}
[Anschütz, \cite{torsorsprojective}, Lemma 2.3] The composition 
\begin{equation}
  \cale_{\gr} : \rep_k(\mg_m) \xrightarrow{\mathcal{E}} \vect_X \xrightarrow{HN} \text{FilVec}_X \xrightarrow{\gr} \text{GrVec}_X 
\end{equation}
is an equivalence of tensor categories onto its essential image, which consists of graded bundles $E = \bigoplus_{n\in \mathbb{Z}} E_{i}$ such that each $E_i$ is semistable of slope $i$.
\end{proposition}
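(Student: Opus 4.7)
The plan is to unravel the three functors in turn and reduce everything to Grothendieck's splitting theorem on $\mathbb{P}^1$. First I would identify $\rep_k(\mg_m)$ with the category of finite-dimensional $\mathbb{Z}$-graded $k$-vector spaces, writing a representation as $V = \bigoplus_{n\in\mathbb{Z}} V_n$ according to its weights. The functor $\mathcal{E}$ sends the one-dimensional representation of weight $n$ to $\mathcal{O}_X(n)$, and hence $\mathcal{E}(V) \cong \bigoplus_n V_n \otimes_k \mathcal{O}_X(n)$.

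Next I would invoke Grothendieck's theorem that every vector bundle on $X = \mathbb{P}^1$ is a direct sum of line bundles. Since $\mathcal{O}(n)$ is semistable of slope $n$, the HN filtration of $\bigoplus \mathcal{O}(n_i)$ groups the summands by degree, and it splits canonically because $\text{Ext}^1(\mathcal{O}(b), \mathcal{O}(a)) = H^1(\mathbb{P}^1, \mathcal{O}(a-b)) = 0$ whenever $a > b$. Consequently $\gr \circ HN$ isolates the isotypic component $\mathcal{O}(n)^{a_n}$ in graded degree $n$, so $\cale_{\gr}$ acts by $V \mapsto \bigoplus_n V_n \otimes \mathcal{O}_X(n)$ with $V_n \otimes \mathcal{O}_X(n)$ placed in degree $n$.

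With this explicit formula in hand the essential image description is immediate: on $\mathbb{P}^1$ any semistable bundle of slope $n$ is automatically of the form $\mathcal{O}(n)^{a_n}$ (again by Grothendieck combined with the slope inequality applied to the largest summand), and this is $\cale_{\gr}$ applied to the graded space with $\dim V_n = a_n$. For fully faithfulness I would compute
\[
\Hom_{\text{GrVec}_X}\!\Big(\bigoplus_n V_n \otimes \mathcal{O}(n),\; \bigoplus_n W_n \otimes \mathcal{O}(n)\Big) \cong \bigoplus_n \Hom_k(V_n, W_n),
\]
using $\Hom(\mathcal{O}(n),\mathcal{O}(n)) = k$, and observe that this agrees with $\Hom_{\rep_k(\mg_m)}(V, W)$.

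Finally, tensor compatibility reduces to the identity $\mathcal{O}(a) \otimes \mathcal{O}(b) \cong \mathcal{O}(a+b)$ paired with the convolution grading $(V \otimes W)_n = \bigoplus_{a+b=n} V_a \otimes W_b$ on representations of $\mg_m$. The subtlest point to verify is that $HN$ is a tensor functor on $\vect_X$, which is exactly where the restriction to $\mathbb{P}^1$ is essential: on an arbitrary curve the tensor product of semistable bundles need not be semistable, but on $\mathbb{P}^1$ every semistable bundle is isotypic of some $\mathcal{O}(n)$, so the tensor product of two isotypic sums is again isotypic of the summed slope. Granting this tensor structure on $HN$, the remainder of the argument is the bookkeeping outlined above.
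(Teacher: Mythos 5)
The paper does not actually prove this proposition; it is quoted from Anschütz (\cite{torsorsprojective}, Lemma 2.3), so there is no internal argument to compare against. Your proof is correct and is essentially the standard one, and the one given in the cited source: reduce everything to Grothendieck's splitting theorem, note that a semistable bundle of slope $n$ on $\mathbb{P}^1$ is necessarily isotypic of type $\mathcal{O}(n)$, and conclude that $\gr\circ HN$ recovers the weight decomposition, from which essential surjectivity onto the stated subcategory, full faithfulness, and the tensor compatibility all follow. Two minor remarks. First, depending on the convention for the $\mg_m$-action on the Hopf bundle, the weight-$n$ character may be sent to $\mathcal{O}(-n)$ rather than $\mathcal{O}(n)$; this only flips the indexing and is harmless. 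Second, your parenthetical that the tensor product of semistable bundles can fail to be semistable ``on an arbitrary curve'' is really a positive-characteristic phenomenon (in characteristic $0$ it is preserved on any smooth projective curve); the relevant point, which you do make, is that on $\mathbb{P}^1$ the isotypic description renders this elementary and characteristic-free, which matters since the paper allows $\Char(k)=p>0$. It would also be worth stating explicitly that $HN$ is functorial because there are no nonzero maps from a semistable bundle to one of strictly smaller slope --- this is what makes $\gr\circ HN$ a functor in the first place --- but that is standard.
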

The main Theorem of Grothendieck, restated in the Tannaka language by Anschütz is now given by
\begin{proposition}
[Anschütz, \cite{torsorsprojective}, Theorem 3.3] Let $G$ be a reductive group over $k$. The composition with $\cale$ defines a faithful functor
\begin{equation}
  \Phi :  \underline{\Hom}^{\otimes}(\rep_k(G), \rep_k(\mg_m)) \to \underline{\Hom}^{\otimes}(\rep_k(G),\vect_X),
\end{equation}
which induces a bijection
\begin{equation}
    \Hom^{\otimes}(\rep_k(G),\rep_k(\mg_m)) \cong H_{\text{ét}}^{1}(X,G)
\end{equation}
on isomorphism classes.
\end{proposition}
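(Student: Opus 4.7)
The plan is to use Grothendieck's classification of vector bundles on $\mathbb{P}^1$ together with Tannaka duality and the equivalence $\cale_{\gr}$ from the previous proposition. The three required statements are: faithfulness of $\Phi$, essential surjectivity on isomorphism classes, and reflection of isomorphism classes.

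\textbf{Faithfulness.} The functor $\cale\colon \rep_k(\mg_m)\to \vect_X$ is itself faithful, since it factors (up to isomorphism) through the fully faithful $\cale_{\gr}$ followed by the faithful forgetful functor $\text{GrVec}_X\to \vect_X$. Post-composition with a faithful functor preserves faithfulness of tensor natural transformations, hence $\Phi$ is faithful.

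\textbf{Essential surjectivity on isomorphism classes.} Given a $G$-bundle $F_G$, viewed as a tensor functor $F_G\colon \rep_k(G)\to \vect_X$, I would compose with $\gr\circ HN$ to obtain a functor $\rep_k(G)\to \text{GrVec}_X$. That $\gr\circ HN$ is a tensor functor on $\vect_X$ rests on Grothendieck's theorem: every vector bundle on $X=\mathbb{P}^1$ splits as $\bigoplus \calo(n_i)$, and since $\calo(a)\otimes \calo(b)=\calo(a+b)$, both $HN$ and the associated graded are compatible with tensor products. For any representation $V$ of $G$, $\gr\circ HN(V_{F_G})$ is a direct sum of copies of $\calo(n)$ in degree $n$, hence semistable of slope $n$, so the composite lands in the essential image of $\cale_{\gr}$. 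Inverting this equivalence produces a tensor functor $\tilde F\colon \rep_k(G)\to \rep_k(\mg_m)$; composing with the forgetful functor $\text{GrVec}_X\to \vect_X$ and using that this functor sends $\cale_{\gr}$ to $\cale$ and (by the semisimple splitting on $\mathbb{P}^1$) $\gr\circ HN$ to the identity up to isomorphism, one obtains $\Phi(\tilde F)\cong F_G$.

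\textbf{Reflection of isomorphism.} If $F_1, F_2\colon \rep_k(G)\to \rep_k(\mg_m)$ satisfy $\cale\circ F_1\cong \cale\circ F_2$, then applying $\gr\circ HN$ to this isomorphism gives $\cale_{\gr}\circ F_1\cong \cale_{\gr}\circ F_2$. Since $\cale_{\gr}$ is an equivalence onto its essential image, $F_1\cong F_2$.

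The main obstacle is verifying that $\gr\circ HN$ is a symmetric monoidal functor, i.e.\ that the Harder-Narasimhan filtration commutes with tensor products. On $\mathbb{P}^1$ this is essentially formal from the Grothendieck splitting and the degree-additivity $\calo(a)\otimes \calo(b)=\calo(a+b)$, but setting it up consistently as a morphism of tensor categories (tracking the filtration data and the coherence isomorphisms) is the technical core. Once that is in place, the rest of the argument is a direct application of the equivalence from the preceding proposition and Tannaka duality.
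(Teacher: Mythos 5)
This proposition is quoted from Anschütz (\cite{torsorsprojective}, Theorem 3.3) and the paper gives no proof of it, so your attempt can only be measured against the cited source. Your faithfulness and injectivity arguments are fine, but the essential surjectivity step contains a genuine gap, and you have misidentified where the difficulty lies. The claim that the forgetful functor $\text{GrVec}_X\to \vect_X$ sends $\gr\circ HN$ to ``the identity up to isomorphism'' is false as a statement about functors: for a nonzero map $f\colon \calo\to\calo(1)$ the induced map on HN-graded pieces is zero (the two bundles sit in different slope degrees), so no natural isomorphism $\text{forget}\circ\gr\circ HN\Rightarrow \id_{\vect_X}$ can exist, since it would force $f=0$. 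Grothendieck's splitting gives you an isomorphism $E\cong \gr HN(E)$ object by object, but not naturally, and certainly not as a tensor natural transformation. Consequently your construction only shows that $F_G$ and $\cale\circ\tilde F$ have isomorphic composites with $\gr\circ HN$; it does not show they are isomorphic as tensor functors $\rep_k(G)\to\vect_X$, which is exactly what surjectivity onto $H^1_{\text{ét}}(X,G)$ requires.

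The missing content is the splitting of the Harder--Narasimhan filtration of the $G$-torsor itself: one must show that the filtered tensor functor $HN\circ F_G$ admits a global splitting, equivalently that the canonical parabolic reduction of $F_G$ reduces further to its Levi. The splittings form a torsor under a unipotent group whose Lie algebra is built from bundles $\mathscr{H}om(\gr^j,\gr^{j+i})$ with $i>0$, hence of positive degree on $\mathbb{P}^1$, and the obstruction dies because $H^1(\mathbb{P}^1,\calo(n))=0$ for $n\geq -1$. This cohomological vanishing is the actual core of Anschütz's theorem and is where the hypothesis $X=\mathbb{P}^1$ is used in an essential way. By contrast, the point you flag as the ``technical core'' --- that $\gr\circ HN$ is a tensor functor --- is comparatively mild (it amounts to semistability being preserved under tensor product, which on $\mathbb{P}^1$ is immediate from the splitting into line bundles) and is in any case already built into the statement of the preceding proposition. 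As written, your argument would apply verbatim to any curve on which semistable bundles of integral slope behave well under tensor product, yet the conclusion is badly false for, say, an elliptic curve; this is a reliable sign that a genuinely $\mathbb{P}^1$-specific input is missing.
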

The inverse of this is given by composition with $\cale_{\gr}^{-1} \circ \gr \circ \text{HN}$. Using this we can now describe all essentially finite $G$-bundles on $X$.
\begin{proposition}
Every essentially finite $G$-torsor over $X$ is trivial.
\end{proposition}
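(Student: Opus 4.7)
The plan is to translate the Tannakian characterisation of essential finiteness from Proposition \ref{ef.tannaka} into Anschütz's framework and then verify that the resulting tensor functor $\rep_k(G)\to \rep_k(\mg_m)$ corresponds to the trivial cocharacter. First, Proposition \ref{ef.tannaka} guarantees that every associated vector bundle $V_{F_G}$ is essentially finite, since it is obtained from some $G$-representation $V$ by applying $\alpha$ to land in $\rep_k(\Gamma)$ and then pushing out along a finite $\Gamma$-torsor. So the question reduces to understanding the behaviour of each $V_{F_G}$ on $\mathbb{P}^1$.

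The key step is to show that each $V_{F_G}$ is actually the trivial bundle. Since $V_{F_G}$ is essentially finite, it is semistable of degree $0$. By Grothendieck's splitting theorem every vector bundle on $\mathbb{P}^1$ is isomorphic to $\bigoplus_i \calo_X(d_i)$; semistability combined with total degree $0$ forces every $d_i=0$, so $V_{F_G}\cong \calo_X^{\dim V}$. Consequently $HN(V_{F_G})$ is the trivial filtration and $\gr(HN(V_{F_G}))$ is concentrated in slope $0$ inside $\text{GrVec}_X$. Applying $\cale_{\gr}^{-1}$ then produces the representation $V$ equipped with the trivial $\mg_m$-action.

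Thus the composition $\cale_{\gr}^{-1}\circ \gr \circ HN \circ F_G\colon \rep_k(G)\to \rep_k(\mg_m)$ is the tensor functor attached to the trivial cocharacter $\mg_m \to G$, which under Anschütz's bijection $\Hom^{\otimes}(\rep_k(G),\rep_k(\mg_m)) \cong H^1_{\text{ét}}(X,G)$ corresponds to the trivial class. Hence $F_G$ is trivial. The main (and essentially only non-formal) point is the triviality of essentially finite vector bundles on $\mathbb{P}^1$; everything else is bookkeeping with the Tannakian equivalences recalled just above the statement.
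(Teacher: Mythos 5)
Your proof is correct, but it takes a genuinely different route from the paper's. The paper's argument never looks at the associated vector bundles individually: it factors $F_G$ as $F_\Gamma\circ\alpha$ via Proposition \ref{ef.tannaka}, forms the tensor functor $f=\cale_{\gr}^{-1}\circ\gr\circ\mathrm{HN}\circ F_\Gamma\colon \rep_k(\Gamma)\to\rep_k(\mg_m)$, and observes that the corresponding group homomorphism $\mg_m\to\Gamma$ must be trivial because $\mg_m$ is connected and $\Gamma$ is finite. That is a one-line connectedness argument requiring no knowledge of which bundles occur. You instead prove the stronger concrete statement that every associated bundle $V_{F_G}$ is trivial, using that essentially finite vector bundles are semistable of degree $0$ together with Grothendieck's splitting theorem on $\mathbb{P}^1$, and then feed this back through Anschütz's equivalence. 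Both are valid; the paper's is slicker and stays entirely at the level of groups, while yours makes the geometry explicit. Note, though, that once you know $V_{F_G}\cong\calo_X^{\dim V}$ for a single \emph{faithful} representation $V$, Lemma \ref{triviality.via.faithful.reps} already gives the triviality of $F_G$ directly, so the final passage back through $\Phi^{-1}$ and the trivial cocharacter is unnecessary bookkeeping in your version. (One could also be pedantic about your phrase ``the tensor functor attached to the trivial cocharacter'': what you actually show is that $\Phi^{-1}(F_G)$ sends every representation to a trivially graded one, which forces the associated homomorphism $\mg_m\to G$ to be trivial after testing against a faithful representation --- but this is easily repaired and does not affect correctness.)
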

\begin{proof}
Let $F_G : \rep_k(G) \to \vect_X$ be an essentially finite torsor. By Proposition (\ref{ef.tannaka}) there exists a commutative diagram of tensor functors
\begin{equation}
    \begin{tikzcd}
    \rep_k(G) \arrow[r, "F_G"] \arrow[d, "\alpha"] & \vect_X \\
        \rep_k(\Gamma) \arrow[ru, "F_\Gamma"] & 
    \end{tikzcd}
\end{equation}
for some finite group $\Gamma$. By \cite{torsorsprojective} this sits inside the following larger diagram
\begin{equation}
    \begin{tikzcd}
    \rep_k(G) \arrow[d, "\alpha"'] \arrow[r, "\Phi^{-1}(F_G)"] \arrow[rr, bend left = 30, "F_G"] & \rep_k(\mg_m) \arrow[r, "\mathscr{E}"] & \vect_X \arrow[r, "\text{HN}"] & \text{Fil}\vect_X \arrow[r, "\text{gr}"] & \text{Gr}\vect_X \arrow[lll, bend left = 30, "\mathscr{E}_{\text{gr}}^{-1}"] \\
    \rep_k(\Gamma) \arrow[ru, dotted, "f"] \arrow[rru, "F_{\Gamma}"'] & & & & 
    \end{tikzcd}
\end{equation}
where $f$ is defined to be the composition
\begin{equation}
    f\coloneqq \mathscr{E}_{\text{gr}}^{-1} \circ \text{gr} \circ \text{HN} \circ F_\Gamma.
\end{equation}
Since all functors are tensor functors, so is $f$. By \cite{Tannaka} $f$ is induced by a morphism 
\begin{equation}
\tilde{f} : \mg_m \to \Gamma.
\end{equation}
Since $\mg_m$ is connected and $\Gamma$ is discrete we see that $\tilde{f}$ and thus $f$ is the trivial map. But this implies that
\begin{equation}
    F_G \cong \mathscr{E}\circ \Phi^{-1}(F_G) \cong \mathscr{E} \circ \mathscr{E}_{\text{gr}}^{-1}\circ \text{gr}\circ \text{HN} \circ F_G \cong \mathscr{E} \circ \mathscr{E}_{\text{gr}}^{-1}\circ \text{gr}\circ \text{HN} \circ F_\Gamma \circ \alpha \cong  \mathscr{E} \circ f \circ \alpha
\end{equation}
is the trivial torsor.
\end{proof}
%
%
%
%
%
\subsection{Genus 1}
In the case when $X$ is an elliptic curve, the density result follows almost immediately from known properties of $M_{G}^{\Ss}$, studied by Laszlo \cite{laszlo} in characteristic 0 and Fr{\u{a}}{\c{t}}il{\u{a}} in charactierstic $p$ \cite{dragos}. 
%
%
\begin{proposition}
Suppose $X$ is an elliptic curve. Then $M_{G}^{\ef}$ is dense in $M_{G}^{\Ss,0}$ for any reductive group $G$.
\end{proposition}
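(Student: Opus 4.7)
The plan is to reduce to the case of a maximal torus, using the classification of degree-$0$ semistable $G$-bundles on an elliptic curve. Fix a maximal torus $T\subset G$ with inclusion $\iota\colon T\hookrightarrow G$. The key input is the theorem of Laszlo \cite{laszlo} (in characteristic $0$) and Fr\u{a}\c{t}il\u{a} \cite{dragos} (in positive characteristic) that every semistable $G$-bundle of degree $0$ over an elliptic curve admits a reduction of structure group to $T$. In other words, the induced morphism of coarse moduli spaces $\iota_{*}\colon M_{T}^{\Ss,0}\to M_{G}^{\Ss,0}$ is surjective on $k$-points.

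With this in hand, the argument is formal. By Proposition \ref{dense.torus}, $M_{T}^{\ef}$ is dense in $M_{T}^{\Ss,0}$, since $T\cong \mg_m^r$ is split. Furthermore, pushforward along any homomorphism of algebraic groups sends essentially finite torsors to essentially finite torsors (noted immediately after the definition of essentially finite torsor), so $\iota_{*}(M_T^{\ef})\subset M_G^{\ef}$.

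Finally, since $\iota_*$ is induced from a morphism of $k$-varieties it is continuous. For any continuous surjection $f\colon A\to B$ and any dense subset $S\subset A$, the image $f(S)$ is dense in $B$: indeed $B=f(A)=f(\overline{S})\subset \overline{f(S)}$. Applying this with $f=\iota_*$ and $S=M_{T}^{\ef}$ shows that $\iota_{*}(M_{T}^{\ef})$ is dense in $M_{G}^{\Ss,0}$, and a fortiori $M_{G}^{\ef}$ is dense in $M_{G}^{\Ss,0}$.

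The main potential obstacle is extracting the surjectivity of $\iota_*$ in the generality of arbitrary connected reductive $G$: the cited references sometimes state the classification for simply connected semisimple $G$ or in terms of an explicit description of $M_G^{\Ss,0}$ as a quotient $(E\otimes X_*(T))/W$, rather than as the claim that every degree-$0$ semistable bundle reduces to $T$. One may need to unwind these descriptions, or reduce the general case to the semisimple/simply connected one by combining Proposition \ref{dense.section} and Proposition \ref{dense.torus} with the isogeny relating $G$, its derived subgroup, and the quotient torus $G/[G,G]$. Once this reduction-to-torus input is secured, no further work is required.
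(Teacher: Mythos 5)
Your proposal is correct and is essentially the same argument as the paper's: reduce to the maximal torus via the Laszlo/Fr\u{a}\c{t}il\u{a} classification, use density of essentially finite $T$-bundles (Proposition \ref{dense.torus}), and push forward along $\iota\colon T\hookrightarrow G$. The surjectivity you flag as a potential obstacle is exactly what the paper extracts from \cite[Theorem 4.16]{laszlo} and \cite[Theorem 1.1]{dragos}, which give the isomorphism $M_{T}^{\Ss,0}/W\cong M_{G}^{\Ss,0}$ induced by $\iota$ for connected reductive $G$, so no further reduction is needed.
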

\begin{proof}
Let $T$ be a maximal torus of $G$ and let $W$ be the corresponding Weyl group. Then, by \cite[Theorem 4.16]{laszlo} and \cite[Theorem 1.1]{dragos}, we have an isomorphism
\begin{equation}
    \varphi : M_{T}^{\Ss,0}/W \to M_{G}^{\Ss,0}
\end{equation}
induced by the inclusion $\iota : T \hookrightarrow G$. Since $\iota_{*}(M_{T}^{\ef})\subset M_{G}^{\ef}$, the result follows from Proposition \ref{dense.torus}.
\end{proof}
%
%
%
%
%
%
%
\subsection{Genus $g\geq 2$}
Let now $X$ be of genus $g \geq 2$. 
Suppose first that $\Char(k)=p>0$ and let $\sigma_X$ denote the absolute Frobenius of $X$. Then a vector bundle $E$ is called periodic under the action of Frobenius if $E\cong (\sigma_{X}^{n})^{*}E$ for some integer $n\geq 1$. If $E$ is such a vector bundle, then, we know that $E$ is trivialized by an étale cover \cite[Theorem 1.1]{lang.stuhler}. Hence, $E$ is essentially finite \cite[Theorem 1]{bds}. In \cite[Proposition 4.1 and corollary 5.1]{frobdensity} the authors proved that, for any $n>0$, the set of $k$-points in $M_{\text{GL}_n}^{\Ss,0}$ (resp $M_{\text{SL}_n}^{\Ss}$) periodic under the action of Frobenius is dense. Hence, the set of $k$-points corresponding to essentially finite vector bundles is also dense. Hence, we may state the following.
\begin{proposition}
Let $k$ be of characteristic $p>0$. For any $n>1$, $M_{\text{PGL}_n}^{\ef,0}$ is dense in $M_{\text{PGL}_n}^{\Ss,0}$.
\end{proposition}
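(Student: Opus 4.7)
The plan is to deduce the $\text{PGL}_n$ case from the $\text{SL}_n$-density invoked in the paragraph immediately preceding the proposition. Consider the central isogeny $\pi \colon \text{SL}_n \to \text{PGL}_n$. Since pushforward along a group homomorphism preserves essential finiteness (as noted in the Example after the definition of essentially finite torsor) and takes degree $0$ to degree $0$ by Lemma \ref{deg0push}, it induces a morphism of varieties
\begin{equation*}
    \pi_{*}\colon M_{\text{SL}_n}^{\Ss} = M_{\text{SL}_n}^{\Ss,0} \longrightarrow M_{\text{PGL}_n}^{\Ss,0}
\end{equation*}
that carries $M_{\text{SL}_n}^{\ef}$ into $M_{\text{PGL}_n}^{\ef,0}$. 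Note that Proposition \ref{dense.section} does not apply verbatim, since $\pi$ admits no algebraic section.

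The key step would be to show that $\pi_{*}$ is surjective on $k$-points. Given a semistable $\text{PGL}_n$-torsor $F$ of degree $0$, Tsen's theorem gives $H^{2}(X, \mg_m)=0$, so $F$ lifts to a $\text{GL}_n$-torsor, i.e., to a vector bundle $E$. The class $\deg F=0 \in \pi_1(\text{PGL}_n)=\mz/n\mz$ records $\deg E$ modulo $n$, so after tensoring $E$ with a line bundle we may assume $\deg E=0$. Since multiplication by $n$ on $\text{Jac}(X)$ is an isogeny of abelian varieties, hence surjective on $k$-points, we can find $M \in \text{Pic}^{0}(X)$ with $M^{\otimes n}\cong (\det E)^{-1}$; then $E\otimes M$ has trivial determinant and defines the desired $\text{SL}_n$-lift of $F$. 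Semistability is preserved throughout, since line bundle twists do not alter the semistability of the underlying vector bundle, and semistability of an $\text{SL}_n$- or $\text{PGL}_n$-torsor coincides with that of the associated vector bundle.

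Finally, a continuous surjection of varieties (in the Zariski topology on $k$-points) sends dense subsets to dense subsets, so the density of $M_{\text{SL}_n}^{\ef}$ in $M_{\text{SL}_n}^{\Ss}$ transfers to density of $\pi_{*}(M_{\text{SL}_n}^{\ef}) \subseteq M_{\text{PGL}_n}^{\ef,0}$ in $M_{\text{PGL}_n}^{\Ss,0}$, which is what we want. The main obstacle is the lifting argument of the previous paragraph, and in particular one should note that surjectivity of $[n]$ on $\text{Jac}(X)$ holds in \emph{all} characteristics: even when $p \mid n$, the morphism $[n]$ fails to be \'etale but remains an isogeny, hence is surjective on $k$-points.
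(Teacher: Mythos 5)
Your proof is correct and follows essentially the same route as the paper: both deduce the statement from the Ducrohet--Mehta density result quoted just before the proposition, by pushing the dense locus of essentially finite bundles forward along a surjection onto $M_{\text{PGL}_n}^{\Ss,0}$. The only difference is that the paper uses $\text{GL}_n\to\text{PGL}_n$ and cites Serre for the surjectivity of $M_{\text{GL}_n}^{\Ss,0}\to M_{\text{PGL}_n}^{\Ss,0}$, whereas you use $\text{SL}_n\to\text{PGL}_n$ and supply the surjectivity argument yourself (Tsen's theorem plus surjectivity of $[n]$ on the Jacobian), which is a correct, self-contained substitute.
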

\begin{proof}
This follows from the previous discussion and the fact that the projection $\text{GL}_n\to \text{PGL}_n$ induces a surjection $M_{\text{GL}_n}^{\Ss,0}\to M_{\text{PGL}_n}^{\Ss,0}$ (see \cite[Proposition 18]{serre}) which takes essentially finite $\text{GL}_n$-bundles to essentially finite $\text{PGL}_n$-bundles.
\end{proof}
Let now $k$ be of characteristic zero. We restrict ourselves to split reductive groups of semisimple rank 1. By classical results (see e.g., \cite[Chapter 21]{milne}) these are all given by the following list.
\begin{proposition}
Let $G$ be a split reductive group of semisimple rank 1. Then, up to isomorphism, $G$ is one of the following groups:
\begin{equation}
    \text{GL}_2\times \mg_{m}^{r}, \hspace{2cm} \text{SL}_2\times \mg_{m}^{r}, \hspace{2cm} \text{PGL}_2 \times \mg_{m}^{r}, \hspace{2cm} r\in \mathbb{N}.
\end{equation}
\end{proposition}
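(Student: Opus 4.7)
The plan is to apply the standard structure theorem for connected reductive groups: there is a central isogeny $\varphi\colon Z(G)^0 \times G^{\text{der}} \to G$ whose kernel is the antidiagonal embedding of the finite group $Z(G)^0 \cap G^{\text{der}}$, which sits inside $Z(G^{\text{der}})$. Since $G$ is split reductive, $Z(G)^0$ is a split torus, and hence $Z(G)^0 \cong \mg_m^r$ for some $r \geq 0$. On the other hand, $G^{\text{der}}$ is split semisimple of rank $1$, so by the classification of split semisimple groups via based root data it is isomorphic to either $\text{SL}_2$ (the simply connected form) or $\text{PGL}_2$ (the adjoint form).

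The argument then splits into cases according to $G^{\text{der}}$ and the intersection $Z(G)^0 \cap G^{\text{der}}$. If $G^{\text{der}} = \text{PGL}_2$, then $Z(G^{\text{der}}) = 1$, so $\ker \varphi$ is trivial and $G \cong \text{PGL}_2\times \mg_m^r$. If $G^{\text{der}} = \text{SL}_2$ and $\ker \varphi$ is trivial, then $G \cong \text{SL}_2\times \mg_m^r$. In the remaining case $G^{\text{der}} = \text{SL}_2$ with $Z(G)^0 \cap G^{\text{der}} = \mu_2 = Z(\text{SL}_2)$, I would first choose a splitting $\mg_m^r \cong \mg_m \times \mg_m^{r-1}$ (necessarily with $r \geq 1$) so that the embedded $\mu_2 \subseteq \mg_m^r$ lies entirely in the first factor. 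Then
\[
    G \cong \big((\mg_m \times \text{SL}_2)/\mu_2\big)\times \mg_m^{r-1},
\]
with $\mu_2$ embedded antidiagonally. Since the multiplication map $(t,A)\mapsto tA$ from $\mg_m \times \text{SL}_2$ to $\text{GL}_2$ is a surjection with kernel exactly this antidiagonal copy of $\mu_2$, the quotient $(\mg_m\times\text{SL}_2)/\mu_2$ is identified with $\text{GL}_2$, and so $G \cong \text{GL}_2 \times \mg_m^{r-1}$.

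The only point that really requires verification — and what I would treat as the main technical step — is the claim that a $\mu_2$-subgroup of the split torus $\mg_m^r$ can always be absorbed into a single $\mg_m$ factor after a suitable change of coordinates. Dually this is the statement that any surjection of character lattices $\mz^r \twoheadrightarrow \mz/2$ is the projection onto a direct summand, which is a direct application of the elementary divisor theorem. Once this is in hand, the remainder of the proof is bookkeeping of the central isogeny together with the classical presentation of $\text{GL}_2$ as a quotient of $\mg_m \times \text{SL}_2$.
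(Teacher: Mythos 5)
Your argument is correct, but it is doing more than the paper does: the paper offers no proof of this proposition at all, simply citing the classification of split reductive groups in Milne's book (Chapter 21), whereas you give a self-contained derivation. Your route via the central isogeny $Z(G)^0\times G^{\mathrm{der}}\to G$ with antidiagonal kernel $Z(G)^0\cap G^{\mathrm{der}}\subseteq Z(G^{\mathrm{der}})$ is the standard way to make the citation explicit, and the case analysis is exhaustive since $Z(\text{PGL}_2)=1$ and the only subgroup schemes of $Z(\text{SL}_2)=\mu_2$ are $1$ and $\mu_2$ itself. The step you single out as the technical crux is indeed the right one and is correct: a surjection $\mz^r\twoheadrightarrow \mz/2$ becomes projection onto the first coordinate after the change of basis $e_1'=e_1$, $e_i'=e_i-f(e_i)e_1$ (choosing $e_1$ with $f(e_1)=1$), which dually puts the embedded $\mu_2$ inside a single $\mg_m$ factor; and the multiplication map $\mg_m\times\text{SL}_2\to\text{GL}_2$, $(t,A)\mapsto tA$, is a faithfully flat surjection with kernel exactly $\{(1,I),(-1,-I)\}$, identifying $(\mg_m\times\text{SL}_2)/\mu_2$ with $\text{GL}_2$. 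The only cosmetic points worth flagging are that the exponent in the $\text{GL}_2$ case comes out as $r-1$, so the list as stated requires $0\in\mathbb{N}$, and that one should say a word about why the quotient by the relocated $\mu_2$ splits off the remaining $\mg_m^{r-1}$ factor (immediate, since the kernel now lies entirely in the $\mg_m\times\text{SL}_2$ factor). What your approach buys is a proof that can be read without unwinding the general classification by root data; what the paper's citation buys is brevity.
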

Hence, by Proposition \ref{dense.section} applied to the projection map, if we show non-density for $\text{SL}_2$, $\text{GL}_2$, and $PGL_2$, we show it for all split reductive groups of semisimple rank 1. Now, by known results (\cite[I.3 page 7]{dominant}), the quotient maps on the respective groups induce dominant morphisms
\begin{equation}
\begin{aligned}
    M_{\text{SL}_2}^{\Ss} &\to M_{\text{PGL}_2}^{\Ss,0} \\
    M_{\text{GL}_2}^{\Ss,0} &\to M_{\text{PGL}_2}^{\Ss,0}.
\end{aligned}
\end{equation}
Thus, to show non-density for split reductive groups of semisimple rank 1 it suffices to show it for $\text{PGL}_2$, which we do now.

To do this we need a bound on the dimension of $M_{\text{O}(2)}^{\Ss}$. For a connected reductive group $G$ it is well-known that $\dim \calm_G = \dim(G)(g-1)$ (see e.g. \cite{sorger}). Since $\text{O}(2)$ is not connected, we compute $\dim \calm_{\text{O}(2)}$ following the approach for connected reductive groups.
\begin{lemma}\label{dim.m.o2}
We have that $\dim \calm_{\text{O}(2)}=g-1$. 
\end{lemma}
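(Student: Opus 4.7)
The plan is to follow the standard deformation-theoretic computation that gives $\dim \calm_G = \dim(G)(g-1)$ for connected reductive $G$, and check that the only point where connectedness was used, namely the degree of the adjoint bundle, still gives degree $0$ for $\text{O}(2)$.

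First I would recall that for any smooth affine algebraic group $H$ over $k$ (not necessarily connected) and any $H$-torsor $F_H$ on $X$, the tangent complex of $\calm_H$ at $F_H$ is computed by the adjoint bundle $\text{ad}(F_H) \coloneqq F_H \times^H \fkh$, where $\fkh = \text{Lie}(H)$. Concretely, the tangent space is $H^1(X, \text{ad}(F_H))$, infinitesimal automorphisms contribute $H^0(X, \text{ad}(F_H))$, and obstructions live in $H^2(X, \text{ad}(F_H))$, which vanishes because $X$ is a curve. Hence $\calm_H$ is smooth at every point, and its dimension at $F_H$ equals
\begin{equation*}
h^1(\text{ad}(F_H)) - h^0(\text{ad}(F_H)) = -\chi(\text{ad}(F_H)).
\end{equation*}
This works verbatim for disconnected $H$, since all we used is that $H$ is smooth and affine.

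Next I would apply this with $H = \text{O}(2)$. Since $\dim \text{O}(2) = 1$, the adjoint bundle $\text{ad}(F_{\text{O}(2)})$ is a line bundle on $X$. The adjoint representation of $\text{O}(2)$ on $\mathfrak{o}(2) \cong k$ factors through the component group: $\text{SO}(2)$ is abelian, so it acts trivially on its Lie algebra, while conjugation by a reflection acts on the infinitesimal rotation generator by $-1$. Thus the adjoint representation is the composition $\text{O}(2) \twoheadrightarrow \text{O}(2)/\text{SO}(2) \cong \mathbb{Z}/2 \hookrightarrow \mg_m$, i.e.\ the sign character. Consequently $\text{ad}(F_{\text{O}(2)})$ is a $2$-torsion line bundle, and in particular has degree $0$.

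Finally I would apply Riemann--Roch: for a degree-$0$ line bundle $L$ on $X$ of genus $g$ we have $\chi(L) = 1 - g$, so
\begin{equation*}
\dim_{F_{\text{O}(2)}} \calm_{\text{O}(2)} = -\chi(\text{ad}(F_{\text{O}(2)})) = g - 1,
\end{equation*}
independently of the point $F_{\text{O}(2)}$, giving the claim. The only non-formal step is the computation that the adjoint line bundle has degree zero; for connected reductive groups this is automatic since the adjoint representation lands in $\text{SL}(\fkg)$, but for $\text{O}(2)$ it requires the direct identification of the adjoint character as the sign character above.
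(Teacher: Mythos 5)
Your proof is correct and follows essentially the same route as the paper: compute the stacky dimension as $-\chi$ of the adjoint bundle and reduce to showing that this line bundle has degree $0$. The only (minor) difference is how degree $0$ is established — the paper observes that the adjoint representation is self-dual, while you identify it explicitly as the sign character $\text{O}(2)\twoheadrightarrow \mathbb{Z}/2\hookrightarrow \mg_m$, which is a slightly sharper statement implying the same conclusion.
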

\begin{proof}
Let $F_{\text{O}(2)}$ be an $\text{O}(2)$ bundle and let $\mathfrak{o}_2$ denote the Lie algebra of $\text{O}(2)$. Let further $\text{Ad} : \text{O}(2) \to \text{GL}(\mathfrak{o}_2)$ denote the adjoint representation and let $E\coloneqq \text{Ad}_{*}F_{\text{O}(2)}$. By definition we know that the dimension of $\calm_{\text{O}(2)}$ at the point $F_{\text{O}(2)}$ is the rank of the cotangent complex at $F_{\text{O}(2)}$, which is equal to $-\chi(X,E)$. By Riemann-Roch we thus have that
\begin{equation}\label{dim.o2}
\begin{aligned}
    \dim \calm_{\text{O}(2)} &= -\deg(E) - \rk(E)\chi(X,\calo_X) \\
    &=-\deg(E) + g-1.
\end{aligned}
\end{equation}
By identifying $\text{O}(2)$ as the matrices
\begin{equation}
    \text{O}(2) = T' \coprod T' \{ \begin{bmatrix} 0 & 1 \\ 1 & 0 \end{bmatrix} \}, \,\,\, T' = \{ \begin{bmatrix} t & 0 \\ 0 & t^{-1} \end{bmatrix} : t \in \mg_m \},
\end{equation}
one sees immediately that the adjoint representation is self dual.
%
Hence, $E \cong E^{\vee}$ and thus $\deg(E)=-\deg(E)$ whence $\deg(E)=0$. From equation (\ref{dim.o2}) we conclude that $\dim \calm_{\text{O}(2)}=g-1$. 
\end{proof}
\begin{lemma}\label{ss.to.ss}
Let $\iota$ denote an inclusion $\iota\colon \text{O}(2) \hookrightarrow \text{PGL}_2$. If $F_{O(2)}$ is a semistable  $\text{O}(2)$-bundle then $\iota_{*}F_{O(2)}$ is a semistable $\text{PGL}_2$-bundle.
\end{lemma}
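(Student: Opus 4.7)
My strategy is to use the fact that, since $\text{PGL}_2$ is adjoint semisimple, a $\text{PGL}_2$-bundle is semistable if and only if its associated rank-three adjoint vector bundle is semistable (this is the standard Ramanathan-type criterion invoked implicitly in the paper when saying ``semistability can be checked on the adjoint bundle''). The task thus reduces to showing that $\text{Ad}_*\iota_*F_{\text{O}(2)}$ is a semistable vector bundle.

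The first key step is a purely representation-theoretic computation. The inclusion $\iota$ realises $\text{O}(2)\cong SO(2)\rtimes \mz/2$ as the normaliser of a maximal torus $T\subset \text{PGL}_2$, with $T$ identified with the identity component $SO(2)$. Decomposing $\mathfrak{pgl}_2$ into $T$-weight spaces gives $\mathfrak{pgl}_2=\mathfrak{t}\oplus \mathfrak{g}_\alpha\oplus \mathfrak{g}_{-\alpha}$; the nontrivial Weyl element of $\text{O}(2)$ acts by $-1$ on $\mathfrak{t}$ and swaps the two root spaces. Hence as representations of $\text{O}(2)$ one obtains
\[ \text{Ad}\circ\iota \;\cong\; \det \,\oplus\, \text{std}, \]
where $\det\colon \text{O}(2)\to\{\pm 1\}$ is the sign (determinant) character and $\text{std}\colon \text{O}(2)\hookrightarrow \text{GL}_2$ is the standard two-dimensional representation. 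Pushing this decomposition forward along $F_{\text{O}(2)}$ yields $\text{Ad}_*\iota_* F_{\text{O}(2)}\cong L\oplus E$, where $E\coloneqq \text{std}_* F_{\text{O}(2)}$ is rank two and $L\coloneqq \det_* F_{\text{O}(2)}=\det E$.

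Because $\det$ is of order two as a character of $\text{O}(2)$, the line bundle $L$ is $2$-torsion, so $\deg L = 0$ and therefore $\mu(E)=\tfrac{1}{2}\deg L = 0$. The hypothesis that $F_{\text{O}(2)}$ is semistable means, through its faithful standard representation, that $E$ is a semistable rank-two vector bundle. Both $L$ and $E$ are then semistable of slope zero, and a direct sum of semistable bundles of equal slope is again semistable; hence $L\oplus E$ is semistable. Applying the adjoint criterion in the reverse direction shows that $\iota_*F_{\text{O}(2)}$ is a semistable $\text{PGL}_2$-bundle.

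The only substantive step is the representation-theoretic splitting $\text{Ad}\circ\iota\cong \det\oplus \text{std}$, which amounts to a direct weight computation on $\mathfrak{pgl}_2$ and the correct identification of the non-identity component of $\text{O}(2)$ with a Weyl-element lift; once this is in place the rest uses only standard facts about semistable bundles on curves, so I do not anticipate any genuine obstacle.
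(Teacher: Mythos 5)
Your proof is correct, but it takes a genuinely different route from the paper. The paper disposes of the lemma in one line by appealing to a Ramanathan-type pushforward result (\cite[Proposition 2.6]{balaji}): since $Z(\mathrm{O}(2))^{0}$ is trivial, the condition on connected centers is automatic and any pushforward of a semistable $\mathrm{O}(2)$-bundle to a reductive group is semistable. You instead give a self-contained argument: reduce semistability of the $\text{PGL}_2$-bundle to semistability of its adjoint bundle (valid here since $\Char(k)=0$ and $\text{PGL}_2$ is semisimple), and compute $\mathrm{Ad}\circ\iota\cong \det\oplus\mathrm{std}$ by identifying $\iota(\mathrm{O}(2))$ with $N_{\text{PGL}_2}(T)$, the reflection acting by $-1$ on $\mathfrak t$ and swapping the root spaces. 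Your weight computation is right: the weights on $\mathfrak g_{\pm\alpha}$ generate $X^{*}(T)$ for $\text{PGL}_2$, so under the natural identification $\pi(\mathrm{O}(2))\cong\mathrm{O}(2)$ the two-dimensional summand really is the standard representation, and the rest (the $2$-torsion of $\det_{*}F$, hence slope $0$ for both summands, hence semistability of the direct sum) is routine. What the paper's citation buys is brevity and generality (any target group, any homomorphism respecting connected centers); what your argument buys is transparency, independence from \cite{balaji}, and as a by-product the fact that the adjoint bundle has degree $0$. The one point worth making explicit if you wrote this up is that you are using the same convention as the paper for what ``semistable $\mathrm{O}(2)$-bundle'' means, namely semistability of the pushforward along the standard faithful representation, since the paper's Definition \ref{def.ss} only covers connected groups.
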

\begin{proof}
The proof of \cite[Proposition 2.6]{balaji} applies verbatim, since an $\text{O}(2)$-bundle $F_{\text{O}(2)}$ is semistable if and only if $\iota_{*}'F_{\text{O}(2)}$ is semistable, where $\iota' \colon \text{O}(2)\hookrightarrow \text{GL}_2$ is the standard representation.
\end{proof}
\begin{proposition}\label{pgl2.main.theorem} 
The subset of essentially finite $\text{PGL}_2$-torsors is not dense inside $M_{\text{PGL}_2}^{\Ss,0}$.
\end{proposition}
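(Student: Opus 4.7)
The approach is a dimension count based on the classification of finite subgroups of $\text{PGL}_2(k)$. Since $\Char(k)=0$, Klein's classical classification says that every finite subgroup of $\text{PGL}_2(k)$ is, up to conjugation, either cyclic, dihedral, or isomorphic to one of the exceptional groups $A_4$, $S_4$, $A_5$. Moreover, the cyclic and dihedral subgroups act on $\mathbb{P}^1$ preserving a distinguished unordered pair of fixed points, hence are conjugate into the normalizer of a maximal torus, which coincides with the image $\iota(\text{O}(2))$ appearing in Lemma \ref{ss.to.ss}.

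The first step is to observe that any essentially finite $\text{PGL}_2$-torsor $F$ admits a reduction to a finite subgroup $\Gamma\subset \text{PGL}_2$, and that by the above dichotomy either (i) $F$ admits a reduction to $\text{O}(2)$, so that by Lemma \ref{ss.to.ss} it lies in $\iota_{*}(M_{\text{O}(2)}^{\Ss})\cap M_{\text{PGL}_2}^{\Ss,0}$, or (ii) $\Gamma$ is one of the exceptional groups $A_4$, $S_4$, $A_5$. The point here is that the reduction can genuinely be transported into $\text{O}(2)$ (or into one of the three exceptional groups), not just into an abstractly isomorphic subgroup of $\text{PGL}_2$.

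The second step is to bound the contribution from case (ii). For any finite group $\Gamma$, the set of $\Gamma$-torsors on $X$ is classified by $\Hom(\pi_{1}^{\text{ét}}(X,x),\Gamma)/\Gamma$, and is finite because $\pi_{1}^{\text{ét}}(X,x)$ is topologically finitely generated. Pushing forward along the (unique up to conjugation) embeddings $\Gamma\hookrightarrow \text{PGL}_2$ therefore yields only finitely many isomorphism classes of $\text{PGL}_2$-bundles from case (ii).

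The final step is the dimension comparison. Lemma \ref{dim.m.o2}, together with the finiteness of $Z(\text{O}(2))$, gives $\dim M_{\text{O}(2)}^{\Ss}=g-1$, so the image $\iota_{*}(M_{\text{O}(2)}^{\Ss})$ has dimension at most $g-1$ in $M_{\text{PGL}_2}^{\Ss}$. Combining cases (i) and (ii), $M_{\text{PGL}_2}^{\ef,0}$ sits inside a subset of dimension at most $g-1$, whereas $M_{\text{PGL}_2}^{\Ss,0}$ is irreducible of dimension $\dim(\text{PGL}_2)(g-1)=3(g-1)$ by standard results (see \cite{sorger}). Since $3(g-1)>g-1$ for $g\geq 2$, the closure of $M_{\text{PGL}_2}^{\ef,0}$ is a proper subset of $M_{\text{PGL}_2}^{\Ss,0}$. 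The main delicate point is justifying that every cyclic or dihedral subgroup of $\text{PGL}_2(k)$ is conjugate into the single algebraic subgroup $\iota(\text{O}(2))$, rather than merely abstractly isomorphic to a subgroup of $\text{O}(2)$; this follows from transitivity of the $\text{PGL}_2$-action on unordered pairs of points of $\mathbb{P}^1$.
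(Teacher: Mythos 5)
Your proposal is correct and follows essentially the same route as the paper: classify the finite subgroups of $\text{PGL}_2$, dispose of the exceptional groups $A_4,S_4,A_5$ by finiteness of $\Hom(\pi_1(X),\Gamma)$, funnel the cyclic and dihedral cases through $\text{O}(2)$ via Lemma \ref{ss.to.ss}, and conclude by comparing $\dim M_{\text{O}(2)}^{\Ss}=g-1$ (Lemma \ref{dim.m.o2}) with $\dim M_{\text{PGL}_2}^{\Ss,0}=3g-3$. The only difference is cosmetic: the paper phrases the final step as ``$f$ is not dominant'' via a transcendence-degree comparison, and is slightly more careful in establishing that the induced map $M_{\text{O}(2)}^{\Ss}\to M_{\text{PGL}_2}^{\Ss,0}$ is a finite-type morphism of schemes so that the dimension bound on the image is legitimate.
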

\begin{proof}
By \cite{finitesubgroup} the finite subgroups of $\text{PGL}_2$ are given by $\text{S}_4$, $\text{A}_5$, $\text{A}_4$ and for all $n\in \mathbb{N}$, $\mu_n$ and $\text{D}_n$. Furthermore, for each finite subgroup there is only one conjugacy class by Proposition 4.1 in \cite{beauville}. Hence, for a given finite subgroup $\Gamma$, we may choose any embedding $\iota : \Gamma \hookrightarrow \text{PGL}_2$ and unambiguously
 consider $\iota_{*}\calm_{\Gamma}\subset \calm_{\text{PGL}_2}^{\Ss,0}$. 

Now, for any such group $\Gamma$, $\iota_{*}M_{\Gamma}\subset M_{\text{PGL}_2}^{\Ss,0}$ is a finite number of points. Indeed, we have that
\begin{equation}
    H_{\text{et}}^{1}(X,\Gamma) = \Hom(\pi_{1}(X), \Gamma)
\end{equation}
and since $\pi_{1}(X)$ is (pro)finitely generated, we see that $H_{\text{et}}^{1}(X,\Gamma)$ is a finite set. Hence, to prove the proposition it is enough to show that the essentially finite torsors whose finite group is isomorphic to $\text{D}_n$ or $\mu_n$ for some $n>0$, is not dense. By abuse of notation, we still denote this subset by $M_{\text{PGL}_2}^{\ef,0}$.

Let $\pi : \text{GL}_2 \to \text{PGL}_2$ denote the quotient morphism. From \cite{finitesubgroup} Section 2 we thus see that we may choose the embedding such that for every such $\Gamma$, we have a commutative diagram
\begin{equation}\label{gamma.o2.pgl2}
    \begin{tikzcd}
    \Gamma \arrow[r, hookrightarrow] \arrow[rr, hookrightarrow, bend right = 30, "\iota"] & \pi(\text{O}(2)) \arrow[r, hookrightarrow] & \text{PGL}_2 
    \end{tikzcd},
\end{equation}
where $\text{O}(2)\subset \text{GL}_2$ is realized as the matrices
\begin{equation}
    \text{O}(2) = T' \coprod T' \{ \begin{bmatrix} 0 & 1 \\ 1 & 0 \end{bmatrix} \}, \,\,\, T' = \{ \begin{bmatrix} a & 0 \\ 0 & a^{-1} \end{bmatrix} : a \in \mg_m \}.
\end{equation}
Since $\pi(\text{O}(2))\cong \text{O}(2)$, and since $\iota'\colon O(2)\cong \pi(\text{O}(2))\hookrightarrow \text{PGL}_2$ is a closed embedding, the induced morphism $\iota_{*}':\calm_{\text{O}(2)}\to \calm_{\text{PGL}_2}$ is locally of finite type (see e.g., \cite[Fact 2.3]{hoffman}). By Lemma \ref{ss.to.ss} this induces a map $\iota_{*}':\calm_{\text{O}(2)}^{\Ss}\to \calm_{\text{PGL}_2}^{\Ss}$, which induces by the universal property of the coarse moduli space a morphism of finite type schemes $M_{\text{O}(2)}^{\Ss}\to M_{\text{PGL}_2}^{\Ss}$. By taking base change along $M_{\text{PGL}_2}^{\Ss,0}$ we obtain an open subscheme $U\subset M_{\text{O}(2)}^{\Ss}$ and a morphism of finite type $f \colon U\to M_{\text{PGL}_2}^{\Ss,0}$.
We thus obtain a Cartesian diagram
\begin{equation}
    \begin{tikzcd}
    U \arrow[r, "f"] \arrow[d, hookrightarrow] & M_{\text{PGL}_2}^{\Ss,0} \arrow[d, hookrightarrow] \\
    M_{\text{O}(2)}^{\Ss} \arrow[r, "\iota_{*}'"] & M_{\text{PGL}_2}^{\Ss}
    \end{tikzcd}
\end{equation}

Now, for any essentially finite $\text{PGL}_2$-torsor, $F_{\text{PGL}_2}$, by (\ref{gamma.o2.pgl2}) we may assume that $F_{\text{PGL}_2}=\iota_{*}'F_{\text{O}(2)}$ where $F_{\text{O}(2)}$ is an essentially finite $\text{O}(2)$-torsor.
Hence, we have a finite type morphism $f : U\to M_{\text{PGL}_2}^{\Ss,0}$ of projective varieties such that
\begin{equation}
    M_{\text{PGL}_2}^{\ef,0} \subset f(U).
\end{equation}
Thus, it suffices to show that $f$ is not dominant. Suppose it was. Then we obtain an inclusion of functions fields
\begin{equation}
    k(M_{\text{PGL}_2}^{\Ss,0}) \hookrightarrow k(U).
\end{equation}
This implies that
\begin{equation}
    3g-3= \dim M_{\text{PGL}_2}^{\Ss,0}= \text{tr.deg}_k k(M_{\text{PGL}_2}^{\Ss,0}) \leq \text{tr.deg}_k k(U) =\dim U = \dim M_{\text{O}(2)}^{\Ss} \leq g-1,
\end{equation}
where the last inequality follows from Lemma \ref{dim.m.o2}.
\end{proof}
From the statement for $\text{PGL}_2$ we obtain the same statement for $\text{SL}_2$.
\begin{corollary}\label{sl2}
The subset of essentially finite $\text{SL}_2$-torsors is not dense inside $M_{\text{SL}_2}^{\Ss,0}$.
\end{corollary}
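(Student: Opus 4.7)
The plan is to deduce non-density for $\text{SL}_2$ from Proposition \ref{pgl2.main.theorem} by transporting density along the dominant quotient morphism $M_{\text{SL}_2}^{\Ss}\to M_{\text{PGL}_2}^{\Ss,0}$ already exhibited in the discussion preceding Proposition \ref{pgl2.main.theorem}. Since $\text{SL}_2$ is simply connected we have $\pi_1(\text{SL}_2)=0$, so $M_{\text{SL}_2}^{\Ss,0}=M_{\text{SL}_2}^{\Ss}$ and there is no ambiguity in the degree.

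The key elementary observation is the following topological fact: if $f\colon X\to Y$ is a continuous map of topological spaces with dense image and $A\subset X$ is dense in $X$, then $f(A)$ is dense in $Y$. Indeed, for any nonempty open $U\subset Y$, denseness of $f(X)$ gives some $x\in X$ with $f(x)\in U$, and then $f^{-1}(U)$ is a nonempty open in $X$ which therefore meets $A$.

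Now I would argue by contradiction: assume that $M_{\text{SL}_2}^{\ef}$ is dense in $M_{\text{SL}_2}^{\Ss,0}$. Let $\pi\colon \text{SL}_2\to \text{PGL}_2$ be the quotient. Since $\pi_*$ carries essentially finite bundles to essentially finite bundles (as push-forward along a group morphism preserves reductions to finite subgroups, cf.\ the example in Section \ref{section.ef}), we have
\begin{equation}
    \pi_{*}\bigl(M_{\text{SL}_2}^{\ef}\bigr) \subset M_{\text{PGL}_2}^{\ef,0}.
\end{equation}
The morphism $\pi_*\colon M_{\text{SL}_2}^{\Ss}\to M_{\text{PGL}_2}^{\Ss,0}$ is dominant by the reference cited just before Proposition \ref{pgl2.main.theorem}, hence continuous with dense image. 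Applying the topological fact above with $X=M_{\text{SL}_2}^{\Ss,0}$, $Y=M_{\text{PGL}_2}^{\Ss,0}$, and $A=M_{\text{SL}_2}^{\ef}$, we conclude that $\pi_*(M_{\text{SL}_2}^{\ef})$ is dense in $M_{\text{PGL}_2}^{\Ss,0}$, and hence so is the larger set $M_{\text{PGL}_2}^{\ef,0}$. This contradicts Proposition \ref{pgl2.main.theorem}.

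There is no real obstacle here; the only thing to double-check is that $\pi_*$ genuinely sends essentially finite bundles to essentially finite bundles, which is immediate from the third item of the example following the definition of essentially finite torsors. One could alternatively phrase the proof using Proposition \ref{dense.section}, but since $\pi$ does not admit a group-theoretic section, the contrapositive-style argument via dominance is the cleaner route.
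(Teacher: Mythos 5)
Your proposal is correct and follows exactly the paper's route: the paper likewise deduces the result from Proposition \ref{pgl2.main.theorem} via the dominance of $M_{\text{SL}_2}^{\Ss}\to M_{\text{PGL}_2}^{\Ss,0}$. You simply spell out the elementary topological step (dense image plus dense subset implies dense image of that subset) that the paper leaves implicit.
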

\begin{proof}
Since the map $M_{\text{SL}_2}^{\Ss} \to M_{\text{PGL}_2}^{\Ss,0}$ is dominant this follows from Proposition (\ref{pgl2.main.theorem}).
\end{proof}
From this we obtain the same statement for $\text{GL}_2$.
\begin{corollary}
The subset of essentially finite $\text{GL}_2$-torsors is not dense inside $M_{\text{GL}_2}^{\Ss,0}$.
\end{corollary}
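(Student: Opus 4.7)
The plan is to reduce the statement to the non-density result for $\text{PGL}_2$ (Proposition \ref{pgl2.main.theorem}), in exactly the same way that Corollary \ref{sl2} was deduced from it. The quotient morphism $\pi\colon \text{GL}_2 \to \text{PGL}_2$ induces a map of coarse moduli spaces $\pi_*\colon M_{\text{GL}_2}^{\Ss,0} \to M_{\text{PGL}_2}^{\Ss,0}$. This is well-defined on the degree-$0$ components by Lemma \ref{deg0push}, since the induced map $\pi_1(\text{GL}_2)=\mathbb{Z}\to \pi_1(\text{PGL}_2)=\mathbb{Z}/2\mathbb{Z}$ sends $0$ to $0$, and by \cite[I.3 page 7]{dominant} it is dominant.

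Next I would observe that pushforward along any group morphism preserves essential finiteness (the third bullet of the example list following the definition of essentially finite torsors), so
\begin{equation}
    \pi_*\bigl(M_{\text{GL}_2}^{\ef,0}\bigr) \subset M_{\text{PGL}_2}^{\ef,0}.
\end{equation}

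To conclude, I would argue by contradiction. Suppose $M_{\text{GL}_2}^{\ef,0}$ were dense in $M_{\text{GL}_2}^{\Ss,0}$. Then continuity of $\pi_*$ gives
\begin{equation}
    \pi_*\bigl(M_{\text{GL}_2}^{\Ss,0}\bigr) = \pi_*\bigl(\overline{M_{\text{GL}_2}^{\ef,0}}\bigr) \subset \overline{\pi_*\bigl(M_{\text{GL}_2}^{\ef,0}\bigr)} \subset \overline{M_{\text{PGL}_2}^{\ef,0}},
\end{equation}
and dominance of $\pi_*$ forces $\overline{M_{\text{PGL}_2}^{\ef,0}} = M_{\text{PGL}_2}^{\Ss,0}$, contradicting Proposition \ref{pgl2.main.theorem}. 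There is essentially no obstacle: all of the genuine content lies in the $\text{PGL}_2$ case, and this reduction is a purely formal consequence of functoriality, continuity of $\pi_*$, and dominance.
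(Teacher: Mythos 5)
Your argument is correct and is precisely the route the paper itself indicates when it says ``the same proof as above applies'': one pushes forward along the dominant morphism $M_{\text{GL}_2}^{\Ss,0}\to M_{\text{PGL}_2}^{\Ss,0}$ induced by the quotient $\pi\colon \text{GL}_2\to \text{PGL}_2$, uses that pushforward preserves essential finiteness, and contradicts Proposition~\ref{pgl2.main.theorem}. The paper merely records an alternative via the determinant map $\det\colon M_{\text{GL}_2}^{\Ss,0}\to \text{Jac}^0(X)$ and Corollary~\ref{sl2}, but your reduction is the intended one and is complete.
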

\begin{proof}
The same proof as above applies, or we have the following.
Consider the map
\begin{equation}
    \det : M_{\text{GL}_2}^{\Ss,0} \to \text{Jac}^0(X).
\end{equation}
Since $\det^{-1}(\calo_X) = M_{\text{SL}_2}^{\Ss}$ by Corollary (\ref{sl2}) we obtain the desired result.
\end{proof}
Finally, the complete statement is the following.
\begin{corollary} \label{dense.ssrk1}
For any split reductive group $G$‚ of semi-simple rank 1, the essentially finite $G$-torsors are not dense in $M_{G}^{\Ss,0}$.
\end{corollary}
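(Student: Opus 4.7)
The plan is to combine the classification of split reductive groups of semisimple rank $1$ with the non-density statements already established for the three individual simple factors, using the direct-product compatibility.

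First I would invoke the classification stated just above: any such $G$ is isomorphic to one of $\text{GL}_2\times \mg_m^r$, $\text{SL}_2\times \mg_m^r$, or $\text{PGL}_2\times \mg_m^r$ for some $r\in \mathbb{N}$. So it suffices to handle each of these three families.

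Next, for each family I would apply Corollary \ref{direct.products} to the decomposition $G = G_0 \times \mg_m^r$, where $G_0 \in \{\text{GL}_2, \text{SL}_2, \text{PGL}_2\}$. The corollary says that if the essentially finite locus fails to be dense in $M_{G_0}^{\Ss,0}$, then the same is true for the product $G_0\times \mg_m^r$. This is applicable because $G_0$ and $\mg_m^r$ are each reductive.

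Finally, the failure of density for the $G_0$-factor is exactly what Proposition \ref{pgl2.main.theorem} (for $\text{PGL}_2$), Corollary \ref{sl2} (for $\text{SL}_2$), and the corollary immediately following (for $\text{GL}_2$) provide. Putting these pieces together yields the statement.

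There is no real obstacle here: the proof is essentially a bookkeeping step assembling the classification with the previously established non-density results and the direct-product reduction. The only very minor point to note is that the hypotheses of Corollary \ref{direct.products} require reductivity of both factors (satisfied) and that one applies the corollary with $G_1 = G_0$ the semisimple-rank-$1$ factor where non-density has been proved, the $\mg_m^r$ factor playing the role of $G_2$.
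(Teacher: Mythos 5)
Your proposal is correct and matches the paper's argument: the paper likewise reduces via the classification to the three families $G_0\times \mg_m^r$, handles the $\mg_m^r$ factor by the direct-product/projection reduction (Proposition \ref{dense.section} and Corollary \ref{direct.products}), and then invokes the non-density results for $\text{PGL}_2$, $\text{SL}_2$, and $\text{GL}_2$. No further comment is needed.
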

\begin{proof}
This follows from the classification of split reductive groups and Proposition \ref{pgl2.main.theorem}.
\end{proof}
\newpage
\printbibliography
\end{document}